\documentclass[11pt]{article}
\usepackage{xcolor}
\usepackage{geometry}
\usepackage[full]{textcomp}
\usepackage{amssymb,amsmath,amsfonts,latexsym}
\usepackage{amsmath,graphicx,bm,xcolor,url}
\usepackage[caption=false]{subfig}
\usepackage{verbatim}
\usepackage{textcomp}
\usepackage{mathrsfs}
\usepackage[colorlinks=true,
            linkcolor=blue,
            citecolor=blue,
            urlcolor=blue]{hyperref}
\usepackage{relsize}
\usepackage{amsthm}
\usepackage[nameinlink,capitalize]{cleveref}
\usepackage{algorithm}
\usepackage{algorithmic}
\usepackage{float}
\usepackage{bbm}

\usepackage{tabularx}
\newcolumntype{L}{>{\raggedright\arraybackslash}X}%
\newcolumntype{R}{>{\raggedleft\arraybackslash}X}%
\newcolumntype{C}{>{\centering\arraybackslash}X}%
\catcode`~=11 \def\UrlSpecials{\do\~{\kern -.15em\lower .7ex\hbox{~}\kern .04em}} \catcode`~=13 

\allowdisplaybreaks[3]

\newcommand{\nn}{\nonumber}

\newcommand{\ba}{\mathbf{a}}
\newcommand{\bA}{\mathbf{A}}
\newcommand{\bb}{\mathbf{b}}

\newcommand{\be}{\mathbf{e}}

\newcommand{\bx}{\mathbf{x}}
\newcommand{\bX}{\mathbf{X}}

\newcommand{\bepsilon}{\bm{\epsilon}}

\newcommand{\truncZsq}[1][\alpha_0]{\mathbb{E}\left[Z^2 \mathbf{1}_{|Z| \leq \Phi_{#1}} \right]}

\newtheorem{assumption}{Assumption}
\newtheorem{corollary}{Corollary}
\newtheorem{definition}{Definition} 
\newtheorem{remark}{Remark}

\newcommand{\qednew}{\nobreak \ifvmode \relax \else
      \ifdim\lastskip<1.5em \hskip-\lastskip
      \hskip1.5em plus0em minus0.5em \fi \nobreak
      \vrule height0.75em width0.5em depth0.25em\fi}

\newcommand{\D}{\operatorname{D}_{KL}}

\renewcommand{\bm}{\mathbf}

\renewcommand{\boldsymbol}{\mathbf}
\newtheorem{thm}{Theorem}

\newtheorem{lem}{Lemma}
\newtheorem{prop}{Proposition}

\crefname{thm}{Theorem}{Theorems}
\crefname{defn}{Definition}{Definitions}
\crefname{lem}{Lemma}{Lemmas}
\crefname{prop}{Proposition}{Propositions}
\crefname{Cor}{Corollary}{Corollaries}
\crefname{ass}{Assumption}{Assumptions}
\crefname{re}{Remark}{Remarks}

\newcommand{\target}{\bm{x}^*}
\newcommand{\init}{\bm{x}_{0}}

\newcommand{\epsl}{\epsilon_l}
\newcommand{\epsu}{\epsilon_u}

\crefname{equation}{}{}

\newcommand{\updateid}[2][0]{#2,#1}

\crefname{algocf}{Algorithm}{Algorithms}

\renewcommand{\ln}{\log}
\newcommand{\subsample}[2]{\{1, \dots, D\}}

\newcommand{\Bin}[2]{{\rm Bin}(#1,#2)}

\newcommand{\ur}{\frac{q + \epsu}{1 - \beta}}
\newcommand{\lr}{\frac{q-\beta-\epsl}{1 - \beta}}
\newcommand{\lrr}{\frac{\alpha}{1-\beta}}
\newcommand{\urr}{\frac{\alpha'}{1-\beta}}

\newcommand{\cv}{\xi}  %
\newcommand{\fobv}{f_{\operatorname{obl}}}

\newcommand{\cobl}{c_{\operatorname{obl}}}
\newcommand{\cadv}{c_{\operatorname{adv}}}

\newif\ifshow %
\newif\ifshowwt
\showwttrue

\newcommand{\wt}[2][f]{%
  \ifshowwt
    \ifnum\pdfstrcmp{#1}{i}=0%
      \textcolor{purple}{wt: #2}%
    \else%
      \footnote{ \textcolor{purple}{wt:#2}
      }%
    \fi%
  \else%
  \fi%
}

\showtrue  
\newcommand{\jr}[2][f]{%
  \ifshow
    \ifnum\pdfstrcmp{#1}{i}=0%
      \textcolor{blue}{#2}%
    \else%
      \footnote{
      \textcolor{blue}{#2}
      }%
    \fi%
  \else%
  \fi%
}
\newcommand{\KL}[2]{D_{\mathrm{KL}}\left(#1\,\|\,#2\right)}

\providecommand{\nn}{\nonumber}

\providecommand{\ba}{\mathbf{a}}\providecommand{\bA}{\mathbf{A}}\providecommand{\bb}{\mathbf{b}}\providecommand{\be}{\mathbf{e}}\providecommand{\bx}{\mathbf{x}}\providecommand{\bX}{\mathbf{X}}

\providecommand{\bepsilon}{\bm{\epsilon}}

\makeatletter
\@ifundefined{independenT}{}{}
\@ifundefined{indep}{}{}
\makeatother

\providecommand{\qednew}{\nobreak \ifvmode \relax \else
      \ifdim\lastskip<1.5em \hskip-\lastskip
      \hskip1.5em plus0em minus0.5em \fi \nobreak
      \vrule height0.75em width0.5em depth0.25em\fi}

\graphicspath{{../}}

\usepackage[nomath]{stix}
\usepackage{epstopdf}
\newtheorem*{question*}{Question}
\newtheorem*{theorem*}{Theorem}
\title{Quantile Randomized Kaczmarz for Streaming  Linear \\ Systems with Massart Noise%
} 
\author{
Emeric Battaglia\thanks{University of California, Irvine.}\hspace{3em}
Jian-Feng Cai\thanks{Hong Kong University of Science and Technology.}\hspace{3em}
Junren Chen\thanks{Columbia University.}\\[8pt]
Anna Ma\footnotemark[1]\hspace{3em}
Deanna Needell\thanks{University of California, Los Angeles.}\hspace{3em}
Tong Wu\footnotemark[2]
}

\date{\today}

\begin{document}

\maketitle

\begin{abstract}
   Quantile randomized Kaczmarz (QRK) has proven to be an efficient solver for corrupted linear systems and has received much attention. 
   It was recently shown by Cai et al. (SIAM J. Matrix Anal. Appl. 47(2):802--823, 2026) that using $O(\frac{\log T}{\log(1/\beta)})$ samples for computing the quantile is necessary and sufficient for QRK to converge linearly over $T$ iterations when solving linear systems with a $\beta$-fraction of arbitrary corruptions, as long as $\beta$ is small enough. 
   However, it remains unclear how large the corruption level $\beta$ can be, and how to compute the required subsample size $D$ explicitly, without hidden constants.
  This paper studies streaming linear systems with Massart noise via QRK using an order-optimal batch size $D=O(\log T)$ in each update. 
  The independence of samples from previous iterations in the streaming setting enables a sharper analysis, yielding explicit, computable bounds on both the tolerable corruption level and the required subsample size. 
  In particular, we establish linear convergence for corruption levels of up to approximately 7\%.
  We also discuss how the constants improve under oblivious noise. %
\end{abstract}

\section{Introduction}
\label{sec11}

Solving large-scale corrupted linear systems arises in scientific computing, data science, and signal processing \cite{boyd2004convex,medical_imaging_2,sensornet}. The goal is to recover the true solution $\target\in\mathbb{R}^n$ from the measurement matrix $\bA = [\ba_1, \ldots, \ba_m]^\top \in \mathbb{R}^{m \times n}$ and the observations
\begin{equation}\label{eq: problem}
    \bb=\bA\target + \bepsilon,
\end{equation}
where the corruption $\bepsilon$ is an arbitrary $(\beta m)$-sparse vector that may be adversarially generated. Traditionally, one seeks the least squares solution from noisy observations. However, since the nonzero values of $\bepsilon$ can be arbitrarily large, the least squares solution may not approximate $\target$ well. Instead of seeking the least squares solution, we assume that $\beta \in (0,1)$ is small enough so that perfect recovery of $\target$ remains feasible.

In an overdetermined setting where the number of measurements $m$ is much larger than the dimension $n$, Kaczmarz methods \cite{karczmarz1937angenaherte} are attractive because they require access to only one row per iteration. %
Specifically, one chooses a row $j$ and updates the current iterate, say $\bx_k$, by projecting it onto the solution hyperplane associated with the $j$-th row: 
\begin{align}
    \bx_{k+1}
    =
    \bx_k
    -
    \frac{\ba_j^\top \bx_k-b_j}{\|\ba_j\|^2}\ba_j.\label{rkupdate}
\end{align}
It was shown \cite{karczmarz1937angenaherte,herman1993algebraic} that the convergence rate may depend on the order of row selection. 
By sampling rows with probabilities proportional to their squared norms, randomized Kaczmarz (RK), introduced by Strohmer and Vershynin \cite{strohmer2009randomized}, enjoys strong theoretical guarantees for consistent linear systems and converges linearly to the true solution. See also \cite{needell2010randomized} for stability under bounded noise. Nonetheless, RK is not automatically robust to sparse adversarial corruption and in general does not solve \cref{eq: problem}.

Haddock, Needell, Rebrova, and Swartworth \cite{haddock_quantilebased_2022} introduced quantile randomized Kaczmarz (QRK), which computes the quantile of the residuals of a size-$D$ subsample\footnote{Throughout the paper, we use $D$ to denote the quantile subsample size, namely the number of rows required in one iteration of QRK to compute the quantile.} at each RK iteration and then performs \cref{rkupdate} only if the residual of $\bx_k$ at row $j$, i.e.\ $|\ba_j^\top \bx_k - b_j|$, is below the quantile. The authors proved that QRK converges linearly to $\target$, as RK does for consistent linear systems, under a class of random matrices $\bA$ (cf. Assumptions 1--2 therein). Steinerberger \cite{steinerberger2023quantile} adapted the analysis to a deterministic matrix $\bA$ 
and, based on a random matrix heuristic, showed that QRK can handle a corruption level of $\beta\approx 0.005$ under $\ba_i \stackrel{{\rm iid}}{\sim} {\rm Unif}(S^{n-1})$. However, the QRK algorithms analyzed in \cite{haddock_quantilebased_2022,steinerberger2023quantile} rely on quantiles computed from the full sample, requiring access to all $m$ samples at each iteration and thereby eliminating the computational advantage of Kaczmarz methods.

In recent work \cite{cai2025subsamplesizequantilebasedrandomized}, a subgroup of the current authors showed that the QRK method in \cite[Method 2.1]{haddock_quantilebased_2022} with a subsample size $D=O(\frac{\log T}{\log(1/\beta)})$ linearly converges to $\target$ over the first $T$ iterations when applied to corrupted linear systems \cref{eq: problem}. This reduces to $D=O(\log T)$ when $\beta$ is a small universal constant, and further implies that $D=O(\log n)$ suffices if $T=n^{O(1)}$,\footnote{This iteration count is often more than sufficient in practice. For instance, under $\ba_i\sim {\rm Unif}(S^{n-1})$, QRK linearly converges to $\target$ with per-iteration contraction rate $c/n$, and therefore $T=n^2$ yields an in-expectation accuracy of $\exp(-cn)$.} thus achieving a massive reduction from $D=m$ in \cite{haddock_quantilebased_2022,steinerberger2023quantile}. The authors also established a matching lower bound showing that $D=\Omega(\frac{\log T}{\log(1/\beta)})$ is necessary for horizon-$T$ convergence of QRK. We note in passing that an earlier work \cite{haddock_subsampled_2023} toward this goal showed that $D=\alpha m$ (for some $\alpha<1$) is enough to ensure linear convergence of QRK.

As such, \cite{cai2025subsamplesizequantilebasedrandomized} appears to provide the best known theoretical result for finite-horizon QRK. We now briefly recap the main result of \cite{cai2025subsamplesizequantilebasedrandomized}, suppressing some inessential details.
\begin{theorem*}[{\cite[Theorem 1]{cai2025subsamplesizequantilebasedrandomized}}]
Suppose that $\bx_T$ is obtained by the QRK in \cite[Algorithm 1]{cai2025subsamplesizequantilebasedrandomized} with data $(\bA,\bb)$, initialization $\bx_0$, quantile parameter $q\in(0,1)$, quantile subsample size $D$, and iteration number $T$. Under $\ba_i\stackrel{{\rm iid}}{\sim}{\rm Unif}(S^{n-1})$ and $m\gtrsim n$, there exist positive constants $c_1,C_2,c_3$ depending only on $q$ such that, if
\[
\beta<c_1
\qquad\text{and}\qquad
D\ge \frac{C_2\log T}{\log(1/\beta)},
\]
then with high probability,
\[
\|\bx_T-\target\|^2\le \Bigl(1-\frac{c_3}{n}\Bigr)^T\|\bx_0-\target\|^2.
\]
\end{theorem*}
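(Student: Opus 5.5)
We would prove the recalled theorem by a one-step contraction argument conditioned on a ``good event'' that controls both the geometry of $\bA$ and all the subsample quantiles seen over the horizon. Write $\be_k=\bx_k-\target$ and let $Q_k$ be the empirical $q$-quantile of $\{|\ba_i^\top\bx_k-b_i|\}_{i\in S_k}$ over the size-$D$ subsample $S_k$. First, we fix deterministic properties of $\bA$ that hold with high probability when $\ba_i\sim{\rm Unif}(S^{n-1})$ and $m\gtrsim n$. These are: (i) a uniform lower bound, over all $\bx$, on the $(q-\beta)$-quantile of $\{|\ba_i^\top\bx|\}$, of order $\|\bx\|/\sqrt n$; (ii) a uniform upper bound on the $(q+\beta)$-quantile, of the same order; (iii) a restricted isometry-type bound $\sum_{i\in I}(\ba_i^\top\bx)^2\le C(|I|/m)\log(em/|I|)\,(m/n)\|\bx\|^2$ for all index sets $I$ with $|I|\le\beta m$. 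All three follow from standard concentration plus an $\epsilon$-net argument.

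Second, we condition on the past, so that $\bx_k$ is fixed, and study a single step. If the selected row $j$ is uncorrupted and accepted, then $\|\be_{k+1}\|^2=\|\be_k\|^2-(\ba_j^\top\be_k)^2$. If row $j$ is corrupted and accepted, then $|\ba_j^\top\bx_k-b_j|\le Q_k$, and the increase is at most $2|\ba_j^\top\be_k|Q_k+Q_k^2$. Suppose $Q_k$ is sandwiched between constant multiples of $\|\be_k\|/\sqrt n$. Summing over rows, the expected decrease from the uncorrupted accepted rows is at least $c\|\be_k\|^2/n$; this uses (i), since at least a $q-\beta$ fraction of clean rows lie below the threshold. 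The expected increase from corrupted rows is at most $C\beta\log(1/\beta)\|\be_k\|^2/n$, which uses (iii) for the cross term. For $\beta<c_1$ the decrease dominates, so $\mathbb E\|\be_{k+1}\|^2\le(1-c_3/n)\|\be_k\|^2$.

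The main obstacle is the third step: guaranteeing that $Q_k$ lies in the good window simultaneously for all $k\le T$ with only $D\approx\log T/\log(1/\beta)$ samples. The upper tail is the delicate part, because adversarial rows can inflate the empirical quantile. For $Q_k$ to exceed the $(q+\beta)$-population quantile, at least roughly $(1-q)D$ of the $D$ draws must fall in a set of mass at most about $\beta$ beyond the clean tail. By a binomial tail bound this happens with probability about $\binom{D}{(1-q)D}\beta^{(1-q)D}\le\beta^{c D}$ (absorbing the clean tail by a Chernoff bound with $q$-dependent constants). Symmetrically, the lower tail requires about $qD$ draws in a set of mass roughly $\beta$ or below, giving the same estimate. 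A union bound over $T$ iterations then gives total failure probability at most $T\beta^{cD}$, which is small exactly when $D\ge C_2\log T/\log(1/\beta)$.

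The subtlety is that $\bx_k$ depends on earlier subsamples. We handle this by noting that the bounds (i)--(ii) are uniform in $\bx$, so the tail estimate for $Q_k$ holds conditionally on the past. Finally, on the good event, the conditional-expectation recursion iterates via the tower property. To upgrade to the high-probability statement for $\|\bx_T-\target\|^2$, we apply Markov's inequality, at the cost of a slightly smaller $c_3$, or use a supermartingale argument on $\|\be_k\|^2(1-c_3/n)^{-k}$ stopped at the first bad quantile.
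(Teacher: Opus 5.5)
Your overall architecture is the right one: uniform properties of $\bA$, a one-step expected contraction, a stopping time for bad quantiles, then Markov. This paper only quotes the theorem, but its streaming analog \cref{thm: convergence in expectation-streaming} follows Lemma~3.5 of \cite{cai2025subsamplesizequantilebasedrandomized} with exactly this structure. The gap is in your third step. The $\beta^{cD}$ tail bounds for leaving a window of \emph{constant} multiples of $\|\be_k\|/\sqrt n$ are false, so sandwiching $Q_k$ for all $k\le T$ at once cannot give $D\ge C_2\log T/\log(1/\beta)$.

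\textbf{Lower side.} Take a threshold $c\|\be_k\|/\sqrt n$ with $c$ independent of $\beta$, as needed for an $\Omega(1)$ decrease. The event $\{Q_k<c\|\be_k\|/\sqrt n\}$ already occurs when $qD$ \emph{clean} draws fall below it. This has probability at least $(p_c-\beta)^{qD}=e^{-O(D)}$, where $p_c$ is the constant fraction of rows below the threshold. The adversary plays no role. Lowering the threshold to the $O(\beta)$ level would make this tail $\beta^{cD}$-small, but the guaranteed decrease would then be only $g(\Phi_{O(\beta)})=O(\beta^3)$, which cannot beat the $\Omega(\beta)$ corruption penalty.

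\textbf{Upper side.} The rows exceeding the clean $(q+\beta)$-quantile have total mass about $(1-q-\beta)+\beta=1-q$, so $Q_k$ exceeds it with probability of order one. At any $\beta$-independent constant threshold, clean draws alone give exit probability $e^{-\Theta(D)}$, and no Chernoff bound turns this into $\beta^{cD}$.

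Hence each step exits your window with probability at least $e^{-CD}$, with $C$ independent of $\beta$. With $D=C_2\log T/\log(1/\beta)$, the expected number of exits over $T$ steps is at least $T^{1-CC_2/\log(1/\beta)}\gg1$ once $\log(1/\beta)>2CC_2$. So the good event you condition on fails with high probability. A union bound over both tails only gives $D\gtrsim_q\log T$, losing exactly the $\log(1/\beta)$ gain the theorem claims.

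The missing idea is to treat the two tails asymmetrically.

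\textbf{Lower tail: only in expectation.} It need not hold at every step. An accepted clean update is nonexpansive, and the bound $2Q_k|\ba_j^\top\be_k|+Q_k^2$ on the damage of an accepted corrupted row is increasing in $Q_k$. So a too-small quantile only costs a factor $1-p_l$ in the expected decrease, with $p_l=e^{-c(q)D}\le\frac12$ once $D\ge C(q)$. This is how $p_l$ enters $\cadv$ in \cref{eq: positive contraction}, and it never enters the stopping time.

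\textbf{Upper tail: a $\beta$-dependent level.} Place it at a level $\tilde m$ (of order $\sqrt{\log(1/\beta)}$ under the Gaussian approximation) such that only $O(\beta m)$ rows, clean or corrupted, have residual above $\tilde m\|\be_k\|/\sqrt n$, uniformly in the iterate. Then $\mathbb{P}(Q_k>\tilde m\|\be_k\|/\sqrt n)\le\binom{D}{(1-q)D}(C\beta)^{(1-q)D}\le\beta^{cD}$ for $\beta<c_1$. The corrupted penalty grows only to $O(\beta\tilde m^2)$, which is $O(\beta\log(1/\beta))$ in the Gaussian approximation and still dominated by the $\Omega(1)$ decrease. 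Only this upper event, intersected with a corrupted update row, enters the stopping time and the union bound. This gives failure probability $T\beta^{cD}$ and hence the claimed $D$. In this paper's notation it corresponds to $\alpha'\asymp\beta$, so that $\D(1-q\|\beta+\alpha')\asymp(1-q)\log(1/\beta)$ and $\tilde m=\Phi_{1-\alpha'/(1-\beta)}$.

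\textbf{Two smaller repairs.}
\begin{enumerate}
\item Property (iii) is false for small $|I|$ (take $I=\{i\}$ and $\bx=\ba_i$). The correct form is $\sigma_{\max}(\bA_I)^2\lesssim 1+\frac{|I|}{n}\log\frac{em}{|I|}$, which still suffices at $|I|=\beta m$.
\item Property (i) at level $q-\beta$ only counts the accepted clean rows. To lower-bound $\sum(\ba_i^\top\be_k)^2$ over them, you also need a small-ball bound at a strictly lower quantile level. Alternatively, use a uniform $\sigma_{\min}$ bound over row subsets of size at least $\alpha_0 m$, as in the cited work.
\end{enumerate}
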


In the theorem, the constant $c_1$ must be sufficiently small. How small should it be? This is an interesting question because it clarifies how much corruption QRK can tolerate. A similar question was asked by Steinerberger \cite[p.~453]{steinerberger2023quantile}; as mentioned above, $c_1=0.005$ was obtained from a random matrix heuristic. However, this appears conservative and relies on full-sample quantiles (i.e., $D=m$). To the best of our knowledge, no explicit value of $c_1$ is known for QRK with $D\ll m$.
Similarly, $C_2$ in the above theorem must be sufficiently large, but it remains unclear how to compute an explicit $C_2$---or, equivalently, an explicit subsample size $D$---that guarantees convergence of QRK.
Determining the tolerable corruption level and an explicit order-minimal subsample size for QRK are among the questions we address in this paper. 

To this end, we propose and analyze QRK for streaming corrupted linear systems,
which serves as a useful surrogate for a highly overdetermined ensemble setting.
In this setting, fresh samples are drawn at each iteration subject to potential corruption---here modeled under the widely studied Massart and oblivious noise models (see \cref{sec:corruption models} and, e.g., \cite{diakonikolas2021relu,diakonikolas2024online,diakonikolas2025online,jeong2025stochastic}).
Compared with the ensemble setting, in which measurements are repeatedly sampled from a fixed matrix $\bA$ \cite{haddock_quantilebased_2022,steinerberger2023quantile,cai2025subsamplesizequantilebasedrandomized}, the independence of fresh samples from previous iterations enables a sharper analysis through explicit expectations (see \cref{rem:streaming-sharper}).
This allows us to address both questions above by obtaining explicit, computable bounds on the tolerable corruption level and the required subsample size.
Our main contributions are twofold: 
\begin{enumerate}
\item \textbf{Linear convergence and maximum tolerable corruption level.}
    We prove that streaming QRK converges linearly to the true solution using only $O(\log T)$ fresh samples per iteration; see \cref{thm: convergence in expectation-streaming}. 
    Building on this convergence guarantee, we characterize the maximum tolerable corruption level under Massart noise in \cref{prop: maximum corruption level}, which reaches $\beta^*(0.85)\approx0.069$; see \cref{fig:q_beta_max_massart}. This substantially improves upon the $\beta\approx0.005$ guarantee of \cite{steinerberger2023quantile}.

    \item \textbf{Explicit and computable subsample size.}
    For corruption levels below the maximum tolerable level, we make the required subsample size $D$ explicit and computable; see \cref{prop: explicit subsample size}. For instance, under a Massart noise rate $\beta = 0.01$, streaming QRK with $q=0.75$ and $D=25$ samples to compute the quantile in each iteration is guaranteed to converge over $T=20000$ iterations; see \cref{fig:D_beta_curve_massart}. While we believe $D=25$ remains quite conservative, to our knowledge a guarantee under such an explicit and reasonably small subsample size was not available in the literature before our work.

\end{enumerate}

Our results can be strengthened further under oblivious noise, in which each corrupted row is perturbed by additive noise independent of everything else; see \cref{sec:corruption models} for details. Under this model, the maximum tolerable corruption level reaches $\beta^*_{\mathrm{obl}}(0.65)\approx0.32$; see \cref{fig:q_beta_max_oblivious}. This substantially improves upon the $\beta\approx0.069$ guarantee under Massart noise, while still requiring only $O(\log T)$ fresh samples per iteration. Moreover, when $q=0.75$ and $\beta=0.01$, choosing $D=13$ guarantees convergence over $T=20000$ iterations; see \cref{fig:D_beta_curve_oblivious}. These stronger guarantees shed light on the question posed in \cite[p.~454]{steinerberger2023quantile}: ``what is the maximum percentage of random corruption that QRK can absorb?''

The remainder of the paper is organized as follows.
\Cref{sec:prelim} collects basic notation and facts, and formalizes streaming QRK together with the Massart and oblivious noise models.
\Cref{sec: streaming setting} presents the main theoretical guarantees under Massart noise and the corresponding results under oblivious noise.
\Cref{sec: numerical results} describes how to compute the explicit thresholds $\beta^*$ and $D^*$ and compares the resulting bounds with numerical simulations.
We conclude in \cref{sec: conclusion}.

\section{Preliminaries}
\label{sec:prelim}

We first collect the basic notation used throughout the paper. We write $[m]=\{1,\dots,m\}$. The inner product and Euclidean norm are $\langle \ba,\bb\rangle=\ba^\top\bb$ and $\|\ba\|=\sqrt{\langle \ba,\ba\rangle}$. We use the natural logarithm $\log(\cdot)$. For Bernoulli parameters $p,q\in[0,1]$, the Kullback-Leibler divergence is
\[
    \KL{p}{q}
    =
    p\log \bigl(\tfrac{p}{q}\bigr)
    +(1-p)\log \bigl(\tfrac{1-p}{1-q}\bigr).
\]
Constants $C,C_i,c,c_i$ may change from line to line. We write $I_1=O(I_2)$ if $I_1\le CI_2$ for an absolute constant $C$, and $I_1=\Omega(I_2)$ if $I_1\ge cI_2$. 
We use $o(1)$ for quantities that vanish as $m,n,T\to\infty$.

For a multiset $S=\{z_1,\dots,z_N\}$ (in which $z_i=z_j$ is possible for $i\ne j$), its $q$-quantile is $z^*_{\lfloor qN\rfloor}$, where $z_1^*\le\cdots\le z_N^*$ is the non-decreasing rearrangement; if $qN<1$, we define the $q$-quantile as $z_1^*$. We denote the $q$-quantile by $Q_q$.
For simplicity, throughout the theoretical analysis we assume that the relevant proportions of $D$, such as $qD$, are integers. For non-integer values, the rounding corrections are of order $1/D$ and are omitted.

Our analysis uses several binomial random variables and repeatedly relies on the following Chernoff bound.

\begin{lem}[Chernoff bound, see, e.g., {\cite[Sec.~4.7]{ash2012information}}]
\label{thm: chernoff}
Let $X\sim \Bin{N}{q}$. For $k \leq Nq$,
\[
    \mathbb{P}\left(X \leq k\right)
    \leq
    \exp\left(-N \cdot \D\left(\frac{k}{N} \| q\right)\right).
\]
Similarly, for $k \geq Nq$,
\[
    \mathbb{P}(X \geq k)
    \leq
    \exp\left(-N \cdot \D\left(\frac{k}{N} \| q\right)\right).
\]
\end{lem}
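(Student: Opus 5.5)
The plan is the standard Chernoff (exponential moment) argument. I give it for the lower tail; the upper tail then follows by symmetry.

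\textbf{Lower tail.} Write $X=\sum_{i=1}^N \xi_i$ with $\xi_i$ i.i.d.\ Bernoulli$(q)$, and set $p=k/N\le q$. For any $\lambda\ge 0$, Markov's inequality applied to $e^{-\lambda X}$ gives
\[
\mathbb{P}(X\le k)\le e^{\lambda k}\,\mathbb{E}\bigl[e^{-\lambda X}\bigr]=\exp\Bigl(N\bigl[\lambda p+\log(1-q+qe^{-\lambda})\bigr]\Bigr),
\]
where independence lets the moment generating function factor. Next I minimize the exponent over $\lambda\ge0$. Setting the derivative to zero gives $p=\frac{qe^{-\lambda}}{1-q+qe^{-\lambda}}$, that is,
\[
e^{-\lambda}=\frac{p(1-q)}{q(1-p)}.
\]
This value is at most $1$ exactly when $p\le q$, so the corresponding $\lambda$ is admissible. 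Substituting it back, $1-q+qe^{-\lambda}=\frac{1-q}{1-p}$. The exponent then becomes
\[
p\log\frac{q(1-p)}{p(1-q)}+\log\frac{1-q}{1-p}=-\Bigl[p\log\frac pq+(1-p)\log\frac{1-p}{1-q}\Bigr]=-\D(p\|q),
\]
which yields the first inequality.

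\textbf{Upper tail.} Apply the first inequality to $N-X\sim\Bin{N}{1-q}$ with threshold $N-k\le N(1-q)$. The Bernoulli KL divergence satisfies $\D(1-p\|1-q)=\D(p\|q)$, so the second bound follows. Equivalently, one can repeat the argument above with $e^{\lambda X}$ in place of $e^{-\lambda X}$.

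\textbf{Edge cases.} These are the only delicate point. When $p=0$ the optimizer is $\lambda\to\infty$. Taking the limit gives $\mathbb{P}(X=0)=(1-q)^N=e^{-N\D(0\|q)}$, so the bound holds, in fact with equality. When $p=q$ the bound reads $\mathbb{P}(X\le k)\le 1$, which is trivial. Apart from these cases, the argument is a routine calculus verification.
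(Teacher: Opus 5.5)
Your proof is correct. The exponential-moment bound with the tilt $e^{-\lambda}=\frac{p(1-q)}{q(1-p)}$ gives exactly $e^{-N\D(p\|q)}$, and reducing the upper tail to the lower one via $N-X\sim\Bin{N}{1-q}$ and $\D(1-p\|1-q)=\D(p\|q)$ is valid. The paper does not prove this lemma itself; it cites it as the standard Chernoff bound from Ash, and your argument is that standard derivation.
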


The remainder of this section introduces the setup and notation used in the analysis.
We first formalize the streaming measurement model and the streaming QRK algorithm.
We then specify the Massart and oblivious noise models, and finally introduce the normalized clean residual $Z$ together with the associated quantile and truncated-moment quantities used throughout \cref{sec: streaming setting}.

\subsection{Streaming QRK}
\label{sec:models}

The streaming setting, also known as the online setting, has been widely adopted in the analysis of iterative algorithms; see, e.g., \cite{haddock_quantilebased_2022,jeong2025stochastic,das2024near,diakonikolas2022streaming,pesme2020online}. The main difference from the traditional ensemble setting is that fresh measurement vectors are sampled at each iteration independently of the current iterate. For instance, when solving a linear system $\bA\target=\bb$, a single RK step in the classical ensemble setting draws a row from the fixed ensemble $(\bA,\bb)$, whereas in the streaming setting a fresh measurement vector is received at each iteration. Besides reducing the memory load, this independence also enables a sharper analysis through direct expectation calculations.
Following \cite{cai2025subsamplesizequantilebasedrandomized}, we focus on i.i.d. measurement vectors uniformly distributed over the unit sphere $S^{n-1}$.

\begin{assumption}[Measurement vectors]
\label{assump:gaussianai}
Let $n\geq 2$. The measurement vectors $\{\ba_i\}_{i=1}^\infty$ are i.i.d. uniformly distributed over $S^{n-1}$, written as $\ba_i \sim {\rm Unif}(S^{n-1})$.
\end{assumption}

We next describe streaming QRK. Each QRK update relies on a quantile computed from a subsample of size $D$. Consequently, each iteration requires a batch of $D+1$ fresh measurements: $D$ measurements are used to compute the subsampled quantile, and one additional measurement is used for the update. 
We write the corrupted observation associated with a measurement vector $\ba_{\updateid[j]{k+1}}$ as
\[
    b_{k+1,j}
    =
    \langle \ba_{\updateid[j]{k+1}},\target\rangle
    +
    \cv_{\updateid[j]{k+1}}\epsilon_{k+1,j},
\]
where $\cv_{\updateid[j]{k+1}}$ is the corruption indicator and $\epsilon_{k+1,j}$ is the corruption value. 
The same streaming QRK procedure is used under both corruption models considered in this paper; only the assumptions on the corruption values $\epsilon_{k+1,j}$ differ.
We formalize this procedure in \cref{alg:streaming-qrk}.
At each iteration $k$, the $j=0$ measurement is reserved for the update, while $j=1,\dots,D$ are used to compute the empirical quantile $Q_{q,k+1}$ in \eqref{def:subsample quantile}. 
In \eqref{eq:update_step_streaming}, the update sample contributes only if its absolute residual does not exceed the threshold $Q_{q,k+1}$.

\begin{algorithm}[H]
\caption{Streaming Quantile Randomized Kaczmarz (QRK)}
\label{alg:streaming-qrk}
\begin{algorithmic}[1]
\REQUIRE Initial iterate $\bx_0 \in \mathbb{R}^n$, quantile level $q \in (0,1)$, subsample size $D \geq 1$, number of iterations $T$.
\FOR{$k = 0, 1, \dots, T-1$}
    \FOR{$j = 0, 1, \dots, D$}
        \STATE $b_{k+1,j} = \langle \ba_{\updateid[j]{k+1}}, \target \rangle + \cv_{\updateid[j]{k+1}}\,\epsilon_{k+1,j}$
    \ENDFOR
    \STATE Compute the empirical $q$-quantile of absolute residuals over $j=1,\dots,D$:
    \begin{equation}
        Q_{q,k+1}
        :=
        \mathrm{Quantile}_q\!\left(
        \bigl\{
        |\langle \ba_{\updateid[j]{k+1}}, \bx_k \rangle - b_{k+1,j}|
        \bigr\}_{j=1}^D
        \right).
        \label{def:subsample quantile}
    \end{equation}
    \STATE Update using the $j=0$ measurement:
    \begin{equation}
        \bx_{k+1}
        =
        \bx_k
        -
        \bigl(
        \langle \ba_{\updateid{k+1}}, \bx_k \rangle - b_{k+1,0}
        \bigr)
        \cdot
        \mathbf{1}_{|\langle \ba_{\updateid{k+1}}, \bx_k \rangle - b_{k+1,0}| \leq Q_{q,k+1}}
        \cdot
        \ba_{\updateid{k+1}}.
        \label{eq:update_step_streaming}
    \end{equation}
\ENDFOR
\end{algorithmic}
\end{algorithm}

\subsection{Corruption models} \label{sec:corruption models}
We now specify the corruption models used in \cref{alg:streaming-qrk}. In the classical ensemble setting, QRK is capable of exactly solving sparsely corrupted linear systems in which $\beta m$ entries of the clean observation vector $\bb$ are replaced by arbitrary values, referred to as a $\beta$-fraction of adversarial corruption. Modeling adversarial corruption in the streaming setting is subtler because there is no fixed ensemble of measurements; see \cite[Sec. 1.2]{jeong2025stochastic}. 
In this paper, we consider two corruption models that are widely studied in the literature: the oblivious noise model and the Massart noise model.

The weaker of the two is the {\it oblivious noise model} \cite{pesme2020online}, in which each measurement is independently corrupted with probability $\beta$ by additive noise that is independent of all other randomness.
Our main focus is on the stronger {\it semi-random Massart noise model} \cite{diakonikolas2021relu,jeong2025stochastic}. Under Massart noise, measurements are independently corrupted with probability $\beta$, but once a measurement is corrupted, its corruption value may be chosen adversarially. In other words, the adversary cannot choose which measurements to corrupt, but can choose the corruption values using any available information, such as the true solution and the measurement vector. 
The following definition formalizes both models, where we omit the step index $k+1$ for brevity.

\begin{definition}[Sparse corruption models]
\label{def:sparse-corruption}
The corruption of the $j$-th measurement is $\cv_j\epsilon_j$, where the indicators $(\cv_j)_{j=0}^D$ are i.i.d. $\operatorname{Bernoulli}(\beta)$ variables independent of the measurement vectors and the previous iterations, and the corruption values $(\epsilon_j)_{j=0}^D \in \mathbb{R}^{D+1}$ are arbitrary. The only difference between Massart noise and oblivious noise is whether the corruption values can be chosen adversarially:
\begin{itemize}
    \item Under Massart noise, the corruption values $(\epsilon_j)_{j=0}^D$ may be chosen jointly and adversarially using any available information, including $(\ba_j)_{j=0}^D$, $\target$, $\bx_k$, and $(\cv_j)_{j=0}^D$.
    \item Under oblivious noise, the corruption values are chosen without using this information; they may be fixed in advance or drawn independently of each other and of all other randomness. We treat fixed values as degenerate random variables and do not distinguish the two cases below.
\end{itemize}
\end{definition}

With the streaming algorithm and corruption models in place, we next record the normalized clean residual used throughout the analysis.
We write $\bX_k$ for the random iterate after $k$ steps and $\bx_k$ for a fixed realization; the corresponding error is $\be_k:=\bx_k-\target$.
For $j=0,\dots,D$, the residuals satisfy
\[
    \langle \ba_{\updateid[j]{k+1}}, \bx_k \rangle - b_{k+1,j}
    =
    \langle \ba_{\updateid[j]{k+1}}, \be_k \rangle
    -
    \cv_{\updateid[j]{k+1}}\,\epsilon_{k+1,j}
    =
    Z\frac{\|\be_k\|}{\sqrt{n}}
    -
    \cv_{\updateid[j]{k+1}}\,\epsilon_{k+1,j},
\]
where
\begin{equation}
    Z := \langle \sqrt{n}\,\ba, \mathbf{e}^{(1)} \rangle,
    \label{eq:Z_def}
\end{equation}
with $\ba \sim \mathrm{Unif}(S^{n-1})$ for some $n \geq 2$ and $\mathbf{e}^{(1)}$ the first canonical basis vector.
This representation follows from the independence of the fresh measurement vectors from the current iterate and the rotational invariance of the unit sphere.
For large $n$, $Z$ is approximately $\mathcal{N}(0,1)$ \cite{vershynin2018high}; we use this approximation to obtain explicit numerical values of the relevant quantiles and expectations in \cref{sec: explicit D}.

Two quantities built from $Z$ appear repeatedly below.
We write $\Phi_\alpha$ for the $\alpha$-quantile of $|Z|$, i.e.\ $\mathbb{P}(|Z|\le \Phi_\alpha)=\alpha$, and define the truncated second moment
\[
    g(t):=\mathbb{E}\bigl[Z^2\mathbf{1}_{|Z|\le t}\bigr],
    \qquad t\ge 0.
\]

\section{Main Results}
\label{sec: streaming setting}
This section develops the main theoretical guarantees for streaming QRK.
We begin with high-probability two-sided bounds on the subsampled quantile $Q_{q,k+1}$ (\cref{lem: sub quantile streaming - simple}), which apply under both Massart and oblivious noise.
Using these bounds, we analyze the Massart noise model: one-step corrupted and uncorrupted updates yield linear convergence (\cref{thm: convergence in expectation-streaming}), after which we characterize the maximum tolerable corruption level $\beta^*$ and the required subsample size $D^*$ (\cref{prop: maximum corruption level,prop: explicit subsample size}).
Finally, under oblivious noise we obtain a sharper joint one-step analysis and the corresponding improvements for $\beta^*_{\mathrm{obl}}$ and $D^*_{\mathrm{obl}}$.
 
Compared to QRK with full-sample quantiles \cite{haddock_quantilebased_2022,steinerberger2023quantile}, one additional challenge in analyzing the subsampled variant where $D\ll m$ \cite{cai2025subsamplesizequantilebasedrandomized} is to control the fluctuating quantile. 
In our streaming setting, we establish the following two-sided probabilistic bounds on the subsampled quantile $Q_{q,k+1}$ defined in \cref{def:subsample quantile}.

\begin{lem} \label{lem: sub quantile streaming - simple} 
     Let $\bx_k\in \mathbb{R}^n$, $q \in (0,1)$, $\epsl \in (0,q)$ and $\epsu \in (0,1-q)$ be given constants, and $\beta \in (0, \min\{q - \epsl, 1-q - \epsu\})$ be the corruption level. Over the randomness of $(\ba_{k+1,j}, \cv_{k+1,j})_{j=1}^D$, we have 
     \begin{gather}
         \mathbb{P}\bigg(Q_{q,k+1} \leq \Phi_{\ur} \frac{\|\be_k\|}{\sqrt{n}}\bigg) \ge 1- \exp(- \D(q \| q + \epsu) D), \label{eq: quantile upper}\\
         \mathbb{P}\bigg( Q_{q,k+1}\ge \Phi_{\lr} \frac{\|\be_k\|}{\sqrt{n}} \bigg) \ge 1- \exp(- \D(1 - q \| 1 - q + \epsl) D), \label{eq: quantile lower}
     \end{gather}
    where $\Phi$ is the quantile function of $|Z|$ such that
    $
        \mathbb{P}(|Z|\leq\Phi_{\alpha}) = \alpha
  $ 
  with $Z$ defined in \cref{eq:Z_def}.
\end{lem}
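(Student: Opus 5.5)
Both bounds reduce the order statistic $Q_{q,k+1}$ to a count of sample residuals below a fixed threshold, which is a binomial random variable because the $D$ quantile samples $(\ba_{k+1,j},\cv_{k+1,j})_{j=1}^D$ are i.i.d.\ and independent of $\bx_k$. We then apply \cref{thm: chernoff}. Write $r_j := |\langle \ba_{k+1,j},\be_k\rangle - \cv_{k+1,j}\epsilon_{k+1,j}|$ and $s:=\|\be_k\|/\sqrt{n}$. By rotational invariance, $\langle \ba_{k+1,j},\be_k\rangle = Z_j s$ with $Z_j$ i.i.d.\ copies of $Z$. The corruption values may be adversarial, so the proof should avoid depending on them. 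We only use the following: on the event $\cv_{k+1,j}=0$ the residual equals $|Z_j|s$, and on $\cv_{k+1,j}=1$ it is arbitrary.

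For \eqref{eq: quantile upper}, set $t_u := \Phi_{(q+\epsu)/(1-\beta)}\, s$, which is well defined because $\beta<1-q-\epsu$. Let $N_u := \#\{j : \cv_{k+1,j}=0,\ |Z_j|\le \Phi_{(q+\epsu)/(1-\beta)}\}$. Each indicator is Bernoulli with mean $(1-\beta)\cdot\frac{q+\epsu}{1-\beta}=q+\epsu$, by independence of $\cv$ from $\ba$. These indicators are i.i.d., so $N_u\sim\Bin{D}{q+\epsu}$. Every $j$ counted in $N_u$ satisfies $r_j\le t_u$, whatever the adversary does. Hence if $N_u\ge qD$, at least $qD$ residuals are $\le t_u$, and the $\lfloor qD\rfloor$-th order statistic satisfies $Q_{q,k+1}\le t_u$. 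Therefore
\[
\mathbb{P}(Q_{q,k+1}> t_u)\le \mathbb{P}(N_u< qD)\le \mathbb{P}(N_u\le qD)\le \exp\bigl(-D\,\D(q\,\|\,q+\epsu)\bigr),
\]
by the lower-tail Chernoff bound with $k=qD\le D(q+\epsu)$.

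For \eqref{eq: quantile lower}, set $t_l := \Phi_{(q-\beta-\epsl)/(1-\beta)}\, s$, which is valid since $\beta<q-\epsl$. The natural move is to count residuals that are guaranteed to be at least $t_l$. Let $N_l := \#\{j:\cv_{k+1,j}=0,\ |Z_j|\ge \Phi_{(q-\beta-\epsl)/(1-\beta)}\}$. Its success probability is $(1-\beta)\bigl(1-\frac{q-\beta-\epsl}{1-\beta}\bigr)=1-q+\epsl$, so $N_l\sim\Bin{D}{1-q+\epsl}$, using continuity of $|Z|$ to ignore the atom. If $Q_{q,k+1}<t_l$, then at least $qD$ residuals are $<t_l$, so at most $(1-q)D$ residuals are $\ge t_l$, and in particular $N_l\le (1-q)D$. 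The upper-bound Chernoff with $N_l$ in the lower tail and $k=(1-q)D\le D(1-q+\epsl)$ then gives $\exp(-D\,\D(1-q\,\|\,1-q+\epsl))$.

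The main obstacle is the bookkeeping of ties and rounding in the order-statistic equivalences: whether $Q$ is the $qD$-th or the $(qD+1)$-th value, and strict versus non-strict inequalities. The paper treats $qD$ as an integer and says $O(1/D)$ corrections are omitted. Under that convention I would verify each inclusion of events carefully, and invoke continuity of the distribution of $|Z|$ to replace $<$ by $\le$ where needed.
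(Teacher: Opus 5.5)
Your proof is correct and follows the paper's argument. For each bound you count only the uncorrupted subsample residuals lying on the appropriate side of the threshold, which gives binomial counts with parameters $q+\epsu$ and $1-q+\epsl$. You then apply the lower-tail Chernoff bound at $k=qD$ and $k=(1-q)D$ respectively, and the result holds uniformly over adversarial corruption values; your contrapositive step ($Q_{q,k+1}<t_l$ forces $N_l\le(1-q)D$) is just a cleaner rendering of the paper's ``more than $(1-q)D$ residuals are no smaller than the threshold'' step.
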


\begin{proof}
    We use the convention from 
    \cref{sec:prelim} that quantities such as $qD$ are integers.
    Recall that given $\bx_k$, $Q_{q,k+1}$ was defined in \cref{def:subsample quantile}.
    We first consider the uncorrupted residuals,
    $$
        \left\{\left|\left\langle\mathbf{a}_{k+1,j}, \be_k\right\rangle\right|: j \in \{1, \dots, D\}\right\}.
    $$
    In the streaming setting, for all $j \in \subsample{i}{k+1}$, $\bm{a}_{k+1,j}$ is independent of $\bX_k$. As a result, the random variables $\left|\left\langle\sqrt{n}\mathbf{a}_{k+1,j}, \be_k \right\rangle\right|$ are i.i.d. with the same distribution as $\|\be_k\||Z|$; see \cref{eq:Z_def}. We define
    \[
    Z_j := \mathbf{1}\bigg(\left|\left\langle\ba_{k+1,j}, \be_k\right\rangle\right| \leq \Phi_{\ur} \frac{\|\be_k\|}{\sqrt{n}},\; \cv_{k+1,j}=0\bigg),
    \]
    where $\mathbf{1}(\cdot)$ is the indicator function. Then we have
    $$
            \mathbb{P}(Z_j = 1) = \mathbb{P}(\cv_{k+1,j}=0 \cap \left|\left\langle\ba_{k+1,j}, \be_k\right\rangle\right| \leq \Phi_{\ur} \frac{\|\be_k\|}{\sqrt{n}}) = (1 - \beta)\ur = q + \epsu
    $$
    from the definition of $\Phi_{\alpha}$ and the independence of $\cv_{k+1,j}$ from $\ba_{k+1,j}$ and $\bX_k$. 
       Using the Chernoff bound (\cref{thm: chernoff}), we have that
    $$
        \mathbb{P}(\sum_{j=1}^D Z_j \leq  q  D) \leq \exp(- \D(q \| q + \epsu) D).
    $$
    On the complement of this event, more than $qD$ of the subsampled residuals are no larger than $\Phi_{\ur}\|\be_k\|/\sqrt n$.
    Therefore, 
    $$
        Q_{q,k+1} \leq \Phi_{\ur} \frac{\|\be_k\|}{\sqrt{n}}
    $$
    holds with probability at least $1-\exp(- \D(q \| q + \epsu) D)$.

    We then define
    \[
    \widetilde{Z}_j := \mathbf{1}\bigg(\left|\left\langle\ba_{k+1,j}, \be_k\right\rangle\right| \geq \Phi_{\lr} \frac{\|\be_k\|}{\sqrt{n}},\; \cv_{k+1,j}=0\bigg),
    \]
    where $\mathbf{1}(\cdot)$ is the indicator function. Then we have
    $$
        \begin{aligned}
            \mathbb{P}(\widetilde{Z}_j = 1) =\mathbb{P}(\cv_{k+1,j}=0 \cap \left|\left\langle\ba_{k+1,j}, \be_k\right\rangle\right| \geq \Phi_{\lr} \frac{\|\be_k\|}{\sqrt{n}})
            = (1 - \beta)(1 - \lr) = 1 - q + \epsl,
        \end{aligned}
    $$
    similarly. 
    Using the Chernoff bound,
    we know
    $$
            \mathbb{P}(\sum_{j=1}^D \widetilde{Z}_j \leq  (1 - q )  D) \leq \exp(- \D(1 - q \| 1 - q + \epsl) D).
    $$
    On the complement of this event, more than $(1-q)D$ of the subsampled residuals are no smaller than
$\Phi_{\lr}\|\be_k\|/\sqrt n$. Therefore,
    $$
        Q_{q,k+1} \geq \Phi_{\lr} \frac{\|\be_k\|}{\sqrt{n}}
    $$
   holds with probability at least $1 - \exp(- \D(1 - q \| 1 - q + \epsl) D)$. 
\end{proof}
 
Note that although $Q_{q,k+1}$ depends on the corruption values $(\epsilon_{k+1,j})_{j=1}^D$, the probability bounds here are over the randomness of $(\ba_{k+1,j}, \cv_{k+1,j})_{j=1}^D$ and hold uniformly for any adversarial corruption values in the Massart noise model (and thus also under the weaker oblivious noise model). 
For example, \cref{eq: quantile upper} can be explicitly written as
$
\mathbb{P}\bigg(\sup_{(\epsilon_{k+1,j})_{j=1}^D} Q_{q,k+1} \leq \Phi_{\ur} \frac{\|\be_k\|}{\sqrt{n}}\bigg) \ge 1- \exp(- \D(q \| q + \epsu) D);
$
and \cref{eq: quantile lower} can be written as
$
\mathbb{P}\bigg(\inf_{(\epsilon_{k+1,j})_{j=1}^D} Q_{q,k+1} \geq \Phi_{\lr} \frac{\|\be_k\|}{\sqrt{n}}\bigg) \ge 1- \exp(- \D(1 - q \| 1 - q + \epsl) D).
$

\subsection{Massart Noise Model}

We now use the two-sided bounds in \cref{lem: sub quantile streaming - simple} to analyze the Massart noise model. 
The upper bound on $Q_{q,k+1}$ controls the one-step error increase when a corrupted row is accepted (\cref{lem: corrupted increase}), while the lower bound ensures sufficient decrease from accepted uncorrupted rows (\cref{lem: error decreased}). Combining these two one-step bounds yields the convergence result in \cref{thm: convergence in expectation-streaming}. 
Under the Massart noise model, probabilities and expectations are taken with respect to the randomness of the relevant sampling rows and corruption indicators $(\ba_{\ell,j}, \cv_{\ell,j})_{j=0}^D$, where $\ell$ indexes the relevant iterations. The resulting bounds hold uniformly over the adversarial corruption values $(\epsilon_{\ell,j})_{j=0}^D$.
For simplicity, within the Massart analysis, we omit the corresponding supremum or infimum notation when the uniform interpretation is clear from context.

\subsubsection{One-Step Bounds and Convergence}
In this subsection, we establish one-step bounds given the current iterate $\bx_k$.
We first bound the error increase after a corrupted update. 
To exclude the worst-case effect of the corruption, we define a successful event $S_{k+1}$ where either the quantile is well-bounded or the update is uncorrupted:
\begin{equation} \label{eq: successful event}
    S_{k+1} = \{ \sup_{(\epsilon_{k+1,j})_{j=1}^D} Q_{q,k+1} \leq \tilde{m} \frac{\|\be_k\|}{\sqrt{n}} \quad \text{or} \quad  \cv_{k+1,0} = 0\},\quad k = 0, 1, \ldots, T-1.
\end{equation} 
Here, $S_{k+1}$ is defined given $\bx_k$ and a threshold $\tilde{m}>0$, where $Q_{q,k+1}$ is the subsampled quantile associated with $\be_k = \bx_k - \target$. The randomness of $S_{k+1}$ is over the sampling rows and corruption indicators $(\ba_{k+1,j}, \cv_{k+1,j})_{j=0}^D$.
The following lemma establishes that, restricted to this successful event, the error increase resulting from a corrupted update row can be explicitly bounded.

\begin{lem}[Corrupted Update] \label{lem: corrupted increase}
    Given $\bx_k$, we have
\begin{align} \label{eq: corrupted update increase}
\mathbb{E}\left[\left\|\boldsymbol{X}_{k+1}-\target\right\|^2 \mathbf{1}_{S_{k+1}} \mid \boldsymbol{X}_k=\boldsymbol{x}_k, \cv_{k+1,0} = 1\right]  \leq  \|\be_k\|^2 + f(\tilde{m}) \frac{\|\be_k\|^2}{n},
\end{align}
where
$
    f(\tilde{m}) = \tilde{m}^2 + 2 \tilde{m} \mathbb{E}|Z|,
$
and $Z$ is distributed as in \cref{eq:Z_def}.
\end{lem}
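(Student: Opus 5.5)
The plan is to compute the one-step error exactly, write $r$ for the residual of the update row, and use the geometry of an accepted update. Condition on $\bX_k=\bx_k$ and $\cv_{k+1,0}=1$. Then the residual is
\[
r=\langle \ba_{k+1,0},\be_k\rangle-\epsilon_{k+1,0}=Z_0\frac{\|\be_k\|}{\sqrt n}-\epsilon_{k+1,0},
\]
where $Z_0$ is distributed as $Z$ in \cref{eq:Z_def}. Note that $\cv_{k+1,0}$ is independent of $\ba_{k+1,0}$, and that $\ba_{k+1,0}$ is independent of $\bx_k$. The update \cref{eq:update_step_streaming} gives $\be_{k+1}=\be_k-r\,\mathbf{1}_{|r|\le Q_{q,k+1}}\ba_{k+1,0}$. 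Since $\|\ba_{k+1,0}\|=1$, we get
\[
\|\be_{k+1}\|^2=\|\be_k\|^2+\mathbf{1}_{|r|\le Q_{q,k+1}}\bigl(r^2-2r\langle\ba_{k+1,0},\be_k\rangle\bigr).
\]
If the update is rejected, the error is unchanged. Since $\mathbf{1}_{S_{k+1}}\le 1$, the rejected part contributes at most $\|\be_k\|^2$. It therefore suffices to bound the increment on the accepted part, multiplied by $\mathbf{1}_{S_{k+1}}$.

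On $S_{k+1}\cap\{\cv_{k+1,0}=1\}$, the definition \cref{eq: successful event} forces the first alternative: $\sup Q_{q,k+1}\le \tilde m\|\be_k\|/\sqrt n$. This holds uniformly over the adversarial values, so whatever $\epsilon_{k+1,0}$ the adversary picks, acceptance implies $|r|\le \tilde m\|\be_k\|/\sqrt n$. On this event, I will bound the increment pointwise by
\[
r^2+2|r|\,|\langle \ba_{k+1,0},\be_k\rangle|\le \tilde m^2\frac{\|\be_k\|^2}{n}+2\tilde m\frac{\|\be_k\|}{\sqrt n}\cdot|Z_0|\frac{\|\be_k\|}{\sqrt n}.
\]
The increment can be negative, but I am only after an upper bound, so dropping the indicators (bounding them by $1$ on the positive terms) is legitimate. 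This bound holds for every adversarial choice of corruption values. The adversary may see $\ba_{k+1,0}$, but that does not matter, because the bound depends on $\ba_{k+1,0}$ only through $|Z_0|$.

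Taking expectation over $\ba_{k+1,0}$ and the subsample gives $\mathbb{E}|Z_0|=\mathbb{E}|Z|$, by independence of the fresh row from $\bx_k$ and rotational invariance. This yields $\|\be_k\|^2+(\tilde m^2+2\tilde m\mathbb{E}|Z|)\|\be_k\|^2/n$, which is the claimed bound \cref{eq: corrupted update increase}.

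The main obstacle is the bookkeeping with $\mathbf{1}_{S_{k+1}}$. We need $\|\be_{k+1}\|^2\mathbf{1}_{S_{k+1}}\le \|\be_k\|^2+(\text{increment})_+\mathbf{1}_{S_{k+1}}$, and the quantile bound must apply to the actual realized $Q_{q,k+1}$. This is where defining $S_{k+1}$ via the supremum over $(\epsilon_{k+1,j})_{j=1}^D$ is essential, since it makes the bound hold regardless of the adversary's choice.
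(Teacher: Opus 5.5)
Your proposal is correct and follows essentially the same route as the paper. You expand $\|\be_k - s\,\ba_{k+1,0}\|^2$ and bound the increment on acceptance by $Q_{q,k+1}^2+2Q_{q,k+1}|\langle \ba_{k+1,0},\be_k\rangle|$ (you phrase this via $|r|\le Q_{q,k+1}$). You then use $Q_{q,k+1}\le \tilde m\|\be_k\|/\sqrt n$ on $S_{k+1}\cap\{\cv_{k+1,0}=1\}$, and finish with $\mathbb{E}|\langle\sqrt n\,\ba_{k+1,0},\be_k\rangle|=\|\be_k\|\,\mathbb{E}|Z|$ by independence and rotational invariance.
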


\begin{proof}
Let $\epsilon=\epsilon_{k+1,0}$. When $\cv_{k+1,0}=1$, the update step size in \cref{eq:update_step_streaming} is
\[
s = \bigl(\langle \ba_{\updateid{k+1}}, \bx_k \rangle - b_{k+1,0}\bigr)
\mathbf{1}_{|\langle \ba_{\updateid{k+1}}, \bx_k \rangle - b_{k+1,0}| \leq Q_{q,k+1}}
= (\langle \bm{a}_{\updateid{k+1}}, \be_k \rangle - \epsilon)
\mathbf{1}_{|\langle \bm{a}_{\updateid{k+1}}, \be_k \rangle - \epsilon| \leq Q_{q,k+1}} .
\]
Then
\begin{align*}
&\mathbb{E}\left[\left\|\boldsymbol{X}_{k+1}-\target\right\|^2 \mathbf{1}_{S_{k+1}} \mid \boldsymbol{X}_k=\boldsymbol{x}_k, \cv_{k+1,0} = 1\right] \\
&= \mathbb{E}\left[\left\| \be_k - s \bm{a}_{\updateid{k+1}} \right\|^2 \mathbf{1}_{S_{k+1}} \mid \boldsymbol{X}_k=\boldsymbol{x}_k, \cv_{k+1,0} = 1\right] \\ 
& \stackrel{(a)}{=} \mathbb{E}\left[\|\be_k\|^2 \mathbf{1}_{S_{k+1}} - 2s\langle \bm{a}_{\updateid{k+1}}, \be_k \rangle \mathbf{1}_{S_{k+1}} + s^2 \mathbf{1}_{S_{k+1}}   \mid \boldsymbol{X}_k=\boldsymbol{x}_k,  \cv_{k+1,0} = 1\right]   
\end{align*}
where (a) 
uses the fact that $\|\ba_{k+1,0}\|=1$. From the threshold indicator in $s$, we have
\[-2s\langle \ba_{k+1,0},\be_k\rangle +s^2 \le 2Q_{q,k+1}|\langle\ba_{k+1,0},\be_k\rangle|+Q_{q,k+1}^2,\]
and this bound is tight: the adversary can choose $\epsilon$ so that $\langle \bm{a}_{\updateid{k+1}}, \be_k \rangle - \epsilon = -Q_{q,k+1} \operatorname{sign}(\langle \bm{a}_{\updateid{k+1}}, \be_k \rangle)$. 
Under the conditioning $\cv_{k+1,0}=1$, the event $S_{k+1}$ reduces to
$\{Q_{q,k+1}\le \tilde{m}\|\be_k\|/\sqrt n\}$, and we have
\begin{align*}
&\mathbb{E}\left[- 2s\langle \bm{a}_{\updateid{k+1}}, \be_k \rangle \mathbf{1}_{S_{k+1}} + s^2 \mathbf{1}_{S_{k+1}} \mid \boldsymbol{X}_k=\boldsymbol{x}_k, \cv_{k+1,0} = 1\right] \\
& \leq \mathbb{E}\left[2 Q_{q,k+1} |\langle \bm{a}_{\updateid{k+1}}, \be_k\rangle| \mathbf{1}_{S_{k+1}} + Q_{q,k+1}^2 \mathbf{1}_{S_{k+1}} \mid \boldsymbol{X}_k=\boldsymbol{x}_k, \cv_{k+1,0} = 1\right] \\
&\leq  2 \left(\tilde{m} \frac{\|\be_k\|}{\sqrt{n}}\right) \mathbb{E} \left[ \left|\left\langle \bm{a}_{\updateid{k+1}}, \be_k\right\rangle\right| \mid \bX_k = \bx_k \right] + \left(\tilde{m} \frac{\|\be_k\|}{\sqrt{n}}\right)^2, 
\end{align*}
where the last step uses the upper bound on $Q_{q,k+1}$ on $S_{k+1}$ and the independence of $\cv_{k+1,0}$.
Since $\ba_{k+1,0}$ is independent of $\bX_k$ and is rotationally invariant, we have
$
\mathbb{E}\left[\left|\left\langle \sqrt{n}\bm{a}_{\updateid{k+1}}, \be_k\right\rangle\right|\right] =  \|\be_k\|\mathbb{E}|Z|.
$
Combining this identity with $\mathbb{E}[\|\be_k\|^2\mathbf{1}_{S_{k+1}}\mid \bX_k=\bx_k,\cv_{k+1,0}=1]\leq \|\be_k\|^2$, we obtain $f(\tilde{m}) = \tilde{m}^2 + 2\tilde{m}\,\mathbb{E}|Z|$, thereby completing the proof.

\end{proof}

Next, we show that an uncorrupted update yields an expected error decrease whenever the quantile is sufficiently large.
Define the corresponding successful event by
\begin{equation} \label{eq: S_{k+1}^*}
    S_{k+1}^*
    :=
    \left\{
    \inf_{(\epsilon_{k+1,j})_{j=1}^D}
    Q_{q,k+1}
    \ge
    \Phi_{\alpha_0}\frac{\|\be_k\|}{\sqrt{n}}
    \right\}.
\end{equation}
Here, the probability is taken over the sampling rows and corruption indicators
$(\ba_{k+1,j},\cv_{k+1,j})_{j=1}^D$.

\begin{lem}[Uncorrupted Update]  \label{lem: error decreased}
    
    Given $\bx_k$, let $\alpha_0 \in [0,1]$, define the event $S_{k+1}^*$ as in \cref{eq: S_{k+1}^*}, and suppose that $\mathbb{P}\left((S_{k+1}^*)^c \right) \leq p_l$ for some $p_l \in [0,1]$.
Then,
\begin{align} \label{eq: uncorrupted update decrease}
\mathbb{E}\left[\left\|\boldsymbol{X}_{k+1}-\target\right\|^2  \mid \boldsymbol{X}_k=\boldsymbol{x}_k, \cv_{k+1,0} = 0\right]  \leq  \|\be_k\|^2 - (1 - p_l) g(\Phi_{\alpha_0}) \frac{\|\be_k\|^2}{n},
\end{align}
where $g(\Phi_{\alpha_0}) = \truncZsq$, with $Z$ distributed as in \cref{eq:Z_def}.
\end{lem}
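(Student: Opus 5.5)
Conditioning on $\bX_k=\bx_k$ and $\cv_{k+1,0}=0$, the update residual is clean: $r:=\langle \ba_{k+1,0},\bx_k\rangle-b_{k+1,0}=\langle \ba_{k+1,0},\be_k\rangle$. Since $\|\ba_{k+1,0}\|=1$, we can expand the square exactly:
\[
\|\bX_{k+1}-\target\|^2=\|\be_k - r\mathbf{1}_{|r|\le Q_{q,k+1}}\ba_{k+1,0}\|^2=\|\be_k\|^2 - r^2\mathbf{1}_{|r|\le Q_{q,k+1}}.
\]
This holds pointwise, for every choice of the adversarial values $(\epsilon_{k+1,j})_{j=1}^D$. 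The cross term $-2r^2$ and the quadratic term $+r^2$ combine into $-r^2$, so an accepted clean row never increases the error. The proof therefore reduces to lower bounding $\mathbb{E}[r^2\mathbf{1}_{|r|\le Q_{q,k+1}}]$.

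Next we discard the bad event. The summand $r^2\mathbf{1}_{|r|\le Q_{q,k+1}}$ is nonnegative, so it is at least $r^2\mathbf{1}_{|r|\le Q_{q,k+1}}\mathbf{1}_{S^*_{k+1}}$. On $S^*_{k+1}$ we have $Q_{q,k+1}\ge \Phi_{\alpha_0}\|\be_k\|/\sqrt n$ whatever the corruption values are, so
\[
r^2\mathbf{1}_{|r|\le Q_{q,k+1}}\ge r^2\mathbf{1}_{|r|\le \Phi_{\alpha_0}\|\be_k\|/\sqrt n}\,\mathbf{1}_{S^*_{k+1}}.
\]
This step handles the Massart adversary uniformly. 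The right-hand side no longer involves the $\epsilon$'s: $S^*_{k+1}$ is already defined through an infimum over them.

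Now use independence. The event $S^*_{k+1}$ depends only on $(\ba_{k+1,j},\cv_{k+1,j})_{j=1}^D$, which in the streaming setting is independent of $\ba_{k+1,0}$ and $\cv_{k+1,0}$. Hence the conditional expectation factorizes as
\[
\mathbb{P}(S^*_{k+1})\,\mathbb{E}\bigl[r^2\mathbf{1}_{|r|\le \Phi_{\alpha_0}\|\be_k\|/\sqrt n}\bigr].
\]
By rotational invariance, $r=Z\|\be_k\|/\sqrt n$ with $Z$ as in \cref{eq:Z_def}. The second factor is therefore $\frac{\|\be_k\|^2}{n}\mathbb{E}[Z^2\mathbf{1}_{|Z|\le\Phi_{\alpha_0}}]=g(\Phi_{\alpha_0})\|\be_k\|^2/n$. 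The first factor is at least $1-p_l$ by hypothesis, and substituting both bounds gives \cref{eq: uncorrupted update decrease}.

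The only delicate point is measurability and uniformity of the event $S^*_{k+1}$ under the adversarial $\epsilon$'s. I would handle it by taking the pointwise lower bound for each fixed adversarial strategy before taking expectations, so the adversary never enters the probability computation. The degenerate case $\|\be_k\|=0$ is trivial, since then $r=0$ and both sides equal $0$.
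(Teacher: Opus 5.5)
Your proposal is correct and follows the same route as the paper. You write the exact identity $\|\mathbf{e}_k\|^2 - r^2\mathbf{1}_{|r|\le Q_{q,k+1}}$, where the paper phrases this as accept/reject with non-expansiveness. You then restrict to $\{|r|\le \Phi_{\alpha_0}\|\mathbf{e}_k\|/\sqrt{n}\}\cap S^*_{k+1}$, using the infimum built into $S^*_{k+1}$. Finally, independence of the update row from the quantile subsample and rotational invariance give the factor $(1-p_l)\,g(\Phi_{\alpha_0})\|\mathbf{e}_k\|^2/n$.
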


\begin{proof}
Condition on $\bX_k=\bx_k$ and $\cv_{k+1,0}=0$. The update may be accepted, in which case
$\|\be_{k+1}\|^2 = \|\be_k\|^2 - \langle \ba_{\updateid{k+1}}, \be_k\rangle^2$, or rejected, in which case $\|\be_{k+1}\|^2 = \|\be_k\|^2$. In both cases, the error is non-expansive, i.e., $\|\be_{k+1}\|^2 \leq \|\be_k\|^2$. Furthermore, on the event $S_{k+1}^*$, we have $Q_{q,k+1} \ge \Phi_{\alpha_0}\frac{\|\be_k\|}{\sqrt{n}}$, so every uncorrupted update with
$
\left|\left\langle \ba_{\updateid{k+1}}, \be_k\right\rangle\right| \leq \Phi_{\alpha_0} \frac{\|\be_k\|}{\sqrt{n}}
$
will be accepted, where $\|\be_{k+1}\|^2 = \|\be_k\|^2 - \langle \ba_{\updateid{k+1}}, \be_k\rangle^2$ is guaranteed.
Let
$
A_{k+1}:=\left\{|\langle \ba_{\updateid{k+1}}, \be_k\rangle|
\leq \Phi_{\alpha_0}\frac{\|\be_k\|}{\sqrt{n}}\right\}.
$
Then we have
    \begin{align*}
&\mathbb{E}\left[\left\|\boldsymbol{X}_{k+1}-\target\right\|^2  \mid \boldsymbol{X}_k=\boldsymbol{x}_k, \cv_{k+1,0} = 0\right] \leq \|\be_k\|^2
- \mathbb{E}\left[\langle \ba_{\updateid{k+1}}, \be_k\rangle^2 \,
\mathbf{1}_{A_{k+1}}\mathbf{1}_{S_{k+1}^*}
\mid \boldsymbol{X}_k=\boldsymbol{x}_k\right], 
    \end{align*}
where we use the independence of $\cv_{k+1,0}$.
By rotational invariance,
$\langle\ba_{\updateid{k+1}},\be_k\rangle^2$ has the same distribution as
$Z^2\|\be_k\|^2/n$, where $Z$ is distributed as in \cref{eq:Z_def}. Since the update row is independent of the quantile subsample determining $S_{k+1}^*$, we have
\begin{align*}
\mathbb{E}\left[\langle \ba_{\updateid{k+1}}, \be_k\rangle^2
\mathbf{1}_{A_{k+1}}\mathbf{1}_{S_{k+1}^*}
\mid \boldsymbol{X}_k=\boldsymbol{x}_k\right] 
\geq (1-p_l)\frac{\|\be_k\|^2}{n}
\mathbb{E}\left[Z^2\mathbf{1}_{|Z|\leq\Phi_{\alpha_0}}\right],
\end{align*}
which yields \cref{eq: uncorrupted update decrease}.
\end{proof}

 Combining the one-step bounds in \cref{lem: corrupted increase} and \cref{lem: error decreased}, we obtain the following convergence result for the streaming setting under Massart noise. 

\newcommand{\mQ}{\Phi_{1 - \urr}\frac{\|\be_k\|}{\sqrt{n}}}
\begin{thm} \label{thm: convergence in expectation-streaming}
      Let $T$ be the number of iterations, $D$ be the subsample size, $q \in (0,1)$ be the quantile parameter, and $\beta \in (0,1)$ be 
    the corruption level. Choose $\alpha,\alpha'>0$ such that 
\begin{gather} 
         \label{eq: valid conditions beta}
                 \epsl =  q - \alpha - \beta >0, \quad  \epsu = 1 - q - (\beta  + \alpha' ) > 0, \quad p_l = \exp(- \D(q \| \beta + \alpha) D),\\
                  \cadv := (1-\beta) (1 - p_l) g(\Phi_{\lrr}) - \beta f(\Phi_{1 - \urr}) > 0, \label{eq: positive contraction}
             \end{gather}
            where $\Phi$ is as in \cref{lem: sub quantile streaming - simple}, $f$ is the error-increase function defined in \cref{lem: corrupted increase}, and $g$ is the error-decrease function defined in \cref{lem: error decreased}. Let $\tau_2$ be the stopping time
       \begin{equation} \label{eq:def_tau2 3}
    \tau_2 =\inf \left\{t \geq 1: S_t^c\right\},
   \end{equation}
   where $S_t$ is the successful event defined in \cref{eq: successful event} evaluated at the iterate
   $\bm{X}_{t-1}$, with $\tilde{m} = \Phi_{1 - \urr}$.
Then
 \begin{gather} 
 \mathbb{E}\left(\left\|\bX_T - \target\right\|^2 \mathbf{1}_{\tau_2 > T}\right) \leq\left(1-\frac{ \cadv }{n}\right)^T\left\|\init - \target\right\|^2,
 \nn\\\label{eq:failure-prob}
    \mathbb{P}(\tau_2 \leq T) \leq 1 - (1 - \beta \exp ( - \D(1 - q \| \beta + \alpha') \cdot D) )^T \leq T \beta \exp ( - \D(1 - q \| \beta + \alpha') \cdot D). 
    \end{gather}   
\end{thm}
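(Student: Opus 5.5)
The plan is to prove a one-step contraction for the truncated error and then iterate it using the tower property. The stopping time is handled through $\mathbf{1}_{\tau_2>k+1}=\mathbf{1}_{\tau_2>k}\mathbf{1}_{S_{k+1}}$.

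\textbf{Step 1: matching the parameters to \cref{lem: sub quantile streaming - simple}.} With $\epsl=q-\alpha-\beta$, we get $\lr=\lrr$. Also $1-q+\epsl=1-\alpha-\beta$, so
\[
\D(1-q\|1-\beta-\alpha)=\D(q\|\beta+\alpha)
\]
by the symmetry $\D(p\|r)=\D(1-p\|1-r)$. Hence $\mathbb{P}((S^*_{k+1})^c)\le p_l$ with $\alpha_0=\lrr$. Similarly, $\epsu=1-q-\beta-\alpha'$ gives $\ur=\frac{1-\beta-\alpha'}{1-\beta}=1-\urr$. This matches $\tilde m=\Phi_{1-\urr}$, and
\[
\mathbb{P}\bigl(\sup Q_{q,k+1}>\tilde m\|\be_k\|/\sqrt n\bigr)\le \exp(-\D(q\|1-\beta-\alpha')D)=\exp(-\D(1-q\|\beta+\alpha')D).
\]

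\textbf{Step 2: one-step bound given $\bX_k=\bx_k$.} I split according to $\cv_{k+1,0}$. On $\{\cv_{k+1,0}=0\}$ we have $\mathbf{1}_{S_{k+1}}=1$, so \cref{lem: error decreased} gives at most $\|\be_k\|^2-(1-p_l)g(\Phi_{\lrr})\|\be_k\|^2/n$. On $\{\cv_{k+1,0}=1\}$, \cref{lem: corrupted increase} gives at most $\|\be_k\|^2+f(\tilde m)\|\be_k\|^2/n$. The indicator $\cv_{k+1,0}$ is independent of $\bX_k$ and of the quantile subsample, so I weight the two cases by $1-\beta$ and $\beta$:
\[
\mathbb{E}\bigl[\|\bX_{k+1}-\target\|^2\mathbf{1}_{S_{k+1}}\mid \bX_k=\bx_k\bigr]\le\Bigl(1-\frac{\cadv}{n}\Bigr)\|\be_k\|^2 .
\]

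\textbf{Step 3: iteration.} The event $\{\tau_2>k\}$ is determined by the first $k$ batches. The batch at step $k+1$ is fresh and independent of them. I condition on $\mathcal F_k$ and apply Step 2 on $\{\tau_2>k\}$:
\[
\mathbb{E}\bigl[\|\bX_{k+1}-\target\|^2\mathbf{1}_{\tau_2>k+1}\bigr]\le(1-\cadv/n)\,\mathbb{E}\bigl[\|\bX_k-\target\|^2\mathbf{1}_{\tau_2>k}\bigr].
\]
Induction from $k=0$ gives the first claim.

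\textbf{Step 4: the failure probability.} $S_t^c$ requires both $\cv_{t,0}=1$ and $\sup Q_{q,t}>\tilde m\|\be_{t-1}\|/\sqrt n$. These two events are independent, and the second has probability at most $\exp(-\D(1-q\|\beta+\alpha')D)$ uniformly in $\bx_{t-1}$ by Step 1. So $\mathbb{P}(S_t^c\mid\mathcal F_{t-1})\le \beta\exp(-\D(1-q\|\beta+\alpha')D)=:r$. Chaining conditional probabilities gives $\mathbb{P}(\tau_2>T)\ge(1-r)^T$, and Bernoulli's inequality $1-(1-r)^T\le Tr$ finishes the bound.

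\textbf{Main obstacle.} The delicate point is the measurability and independence bookkeeping in Steps 2–3. The adversary may choose the $\epsilon$'s after seeing everything, but the bounds hold uniformly because $S_{k+1}$ and $S^*_{k+1}$ are defined through the supremum and infimum over corruption values. I must also check that conditioning on $\cv_{k+1,0}$ leaves the law of the subsample and $\ba_{k+1,0}$ unchanged.
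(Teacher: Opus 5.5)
Your proposal is correct and follows essentially the same route as the paper. In both, the failure probability of $S_{k+1}$ is bounded by $\beta\exp(-\D(1-q\|\beta+\alpha')D)$ via \cref{lem: sub quantile streaming - simple} and chained with Bernoulli's inequality, and the one-step contraction comes from splitting on $\cv_{k+1,0}$, applying \cref{lem: corrupted increase,lem: error decreased}, and iterating. The only difference is that you spell out the KL-symmetry parameter matching and the induction through $\mathbf{1}_{\tau_2>k+1}=\mathbf{1}_{\tau_2>k}\mathbf{1}_{S_{k+1}}$, whereas the paper cites \cite[Lemma 3.5]{cai2025subsamplesizequantilebasedrandomized} for the induction.
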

\begin{proof}
    Recall that
    $
    S_{k+1} = \left\{ Q_{q,k+1} \leq \Phi_{1 - \urr} \frac{\|\bX_k - \target\|}{\sqrt{n}} \quad \text{or} \quad \cv_{k+1,0} = 0\right\}.
    $
    We first upper bound its failure probability using \cref{lem: sub quantile streaming - simple}. 
    Given any fixed $\bx_k$, the event
    $
        Q_{q,k+1} > \Phi_{\frac{q + \epsu}{1 - \beta}} \frac{\|\be_k\|}{\sqrt{n}} = \Phi_{1 - \urr} \frac{\|\be_k\|}{\sqrt{n}}
    $
    occurs with probability at most $\exp ( - \D( q \| q + \epsu) \cdot D) = \exp ( - \D(1 - q \| \beta + \alpha') \cdot D)$, and the event $\cv_{k+1,0} = 1$ holds with probability $\beta$ independently.
    Therefore, we have 
    $$
    \mathbb{P}((S_{k+1})^c \mid \bm{X}_{k} = \bx_k) \leq \beta \exp ( - \D(1 - q \| \beta + \alpha') \cdot D).
    $$
Since, conditional on $\boldsymbol{X}_k=\bx_k$, the remaining randomness in $S_{k+1}$ is independent of the previous iterations, we have
    $$
    \mathbb{P}\left(\tau_2>k+1 \mid \tau_2>k\right) \geq 1-\beta \exp \left(-\D\left(1-q \| \beta+\alpha^{\prime}\right) D\right).
    $$
    Iterating this bound gives
    \begin{align*}
        \mathbb{P}(\tau_2 \leq T)
        &= 1- \mathbb{P}(\tau_2 > T) \\
        &\leq 1 - \left(1 - \beta \exp ( - \D(1 - q \| \beta + \alpha') \cdot D) \right)^T \\
        &\leq T \beta \exp ( - \D(1 - q \| \beta + \alpha') \cdot D),
    \end{align*}
    where the last inequality follows from Bernoulli's inequality, $(1-x)^T \geq 1 - Tx$, with $x  = \beta \exp ( - \D(1 - q \| \beta + \alpha') \cdot D) \in [0,1]$ and $T \geq 1$.
    Following \cite[Lemma 3.5]{cai2025subsamplesizequantilebasedrandomized}, we estimate the one-step contraction with the indicator of $S_{k+1}$:
    $$
        \begin{aligned}
             & \mathbb{E}\left[\left\|\boldsymbol{X}_{k+1}-\target\right\|^2 \mathbf{1}_{S_{k+1}} \mid \boldsymbol{X}_k=\boldsymbol{x}_k\right]                                                                    \\
             & = \beta \mathbb{E}\left[\left\|\boldsymbol{X}_{k+1}-\target\right\|^2 \mathbf{1}_{S_{k+1}} \mid \boldsymbol{X}_k=\boldsymbol{x}_k,  \cv_{k+1,0} = 1\right]      \\
             & + (1 - \beta) \mathbb{E}\left[\left\|\boldsymbol{X}_{k+1}-\target\right\|^2 \mathbf{1}_{S_{k+1}}  \mid \boldsymbol{X}_k=\boldsymbol{x}_k,   \cv_{k+1,0} = 0\right] \\
             & \leq \beta I_1 + (1 - \beta) I_2.
        \end{aligned}
    $$

     \subsubsection*{Case I: Corrupted Update Sample $\cv_{k+1,0} = 1$}
	    By the definition of $S_{k+1}$, we take $\tilde{m} = \Phi_{1 - \urr}$ and apply \cref{lem: corrupted increase}:
    $$
        \begin{aligned}
            I_1 & = \mathbb{E}\left[\left\|\boldsymbol{X}_{k+1}-\target\right\|^2 \mathbf{1}_{S_{k+1}} \mid \boldsymbol{X}_k=\boldsymbol{x}_k,  \cv_{k+1,0} = 1\right]                                                                                                             \\
            &\leq  \|\be_k\|^2 + f(\Phi_{1 - \urr}) \frac{\|\be_k\|^2}{n}.
        \end{aligned}
    $$

     \subsubsection*{Case II: Uncorrupted Update Sample $\cv_{k+1,0} = 0$}
	     By \cref{lem: sub quantile streaming - simple}, we have $Q_{q,k+1} \ge \Phi_{\lr}\frac{\|\be_k\|}{\sqrt{n}} = \Phi_{\lrr} \frac{\|\be_k\|}{\sqrt{n}}$ with probability at least $1 - p_l = 1 - \exp(-\D(q \| q - \epsl) D) = 1 - \exp(-\D(q \| \beta + \alpha) D)$. Taking $\alpha_0 = \lrr$ and this value of $p_l$, we apply \cref{lem: error decreased}:
    $$
        \begin{aligned}
           \mathbb{E}\left[\left\|\boldsymbol{X}_{k+1}-\target\right\|^2 \mathbf{1}_{S_{k+1}}  \mid \boldsymbol{X}_k=\boldsymbol{x}_k,  \cv_{k+1,0} = 0 \right]  
            & \leq  \mathbb{E}\left[\left\|\boldsymbol{X}_{k+1}-\target\right\|^2  \mid \boldsymbol{X}_k=\boldsymbol{x}_k,  \cv_{k+1,0} = 0 \right] \\
            &\leq \|\be_k\|^2 - (1 - p_l) g(\Phi_{\lrr}) \frac{\|\be_k\|^2}{n}.\\ 
        \end{aligned}
    $$

    Combining the two cases gives
        \begin{align}
             & \mathbb{E}\left[\left\|\boldsymbol{X}_{k+1}-\target\right\|^2 \mathbf{1}_{S_{k+1}} \mid \boldsymbol{X}_k=\boldsymbol{x}_k\right] \nonumber \\
             & \leq  \|\be_k\|^2  - \left((1-\beta) (1 - p_l) g(\Phi_{\lrr}) - \beta f(\Phi_{1 - \urr})\right)\frac{\|\be_k\|^2}{n}  \nonumber\\
             & = \left(1 - \frac{\cadv}{n} \right) \|\be_k\|^2, \label{eq: adv comibine error}
        \end{align}
    where $\cadv := (1-\beta) (1 - p_l) g(\Phi_{\lrr}) - \beta f(\Phi_{1 - \urr}) > 0$ by assumption. 
    Since the one-step contraction holds for every fixed $\bx_k$ and $k=0,1,2,\ldots,T-1$, the induction argument in \cite[Lemma 3.5]{cai2025subsamplesizequantilebasedrandomized} gives
    \begin{align*} 
        \mathbb{E}\left(\left\|\bm{X}_T - \target\right\|^2 \mathbf{1}_{\tau_2>T}\right) \le \Big(1-\frac{\cadv}{n}\Big)^T \mathbb{E}\left(\left\|\bm{X}_0 - \target\right\|^2\right),
	      \end{align*}
	    which completes the proof.
\end{proof}

By Markov's inequality, \cref{thm: convergence in expectation-streaming} yields the following high-probability guarantee.

\begin{corollary} \label{cor: high probability bound}
    In the setting of \cref{thm: convergence in expectation-streaming}, let $T\ge 1$ and $\bx_0 \neq \target$. If $\mathbb{P}(\tau_2 \leq T) \leq \delta_f < \frac{1}{2}$, then
   $\|\bm{X}_T -\target\|^2 \le (1-\frac{ \cadv }{2n})^T\|\bm{x}_0-\target\|^2$ with probability at least $1 - \delta_f - 2\exp(- \frac{\cadv T}{2n})$. 
\end{corollary}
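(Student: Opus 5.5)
The plan is to split the failure event into two pieces: either the stopping time $\tau_2$ fires before $T$, or it does not fire but the error is still large. Set $r := (1-\frac{\cadv}{2n})^T\|\bx_0-\target\|^2$ and write
\[
\mathbb{P}\bigl(\|\bX_T-\target\|^2 > r\bigr) \le \mathbb{P}(\tau_2\le T) + \mathbb{P}\bigl(\|\bX_T-\target\|^2\mathbf{1}_{\tau_2>T} > r\bigr).
\]
The first term is at most $\delta_f$ by hypothesis; it is also controlled explicitly by \cref{eq:failure-prob} of \cref{thm: convergence in expectation-streaming}.

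For the second term I will apply Markov's inequality to the nonnegative variable $\|\bX_T-\target\|^2\mathbf{1}_{\tau_2>T}$. Its expectation is bounded by the first display of \cref{thm: convergence in expectation-streaming}. Since $\bx_0\neq\target$, we have $r>0$, and this gives the bound
\[
\frac{(1-\cadv/n)^T}{(1-\cadv/(2n))^T} = \Bigl(\frac{1-\cadv/n}{1-\cadv/(2n)}\Bigr)^T .
\]
The remaining step is elementary. With $x=\cadv/n$, I need $0<x\le 1$ so that all bases are nonnegative. This holds because $\cadv\le g(\cdot)\le \mathbb{E}Z^2 = 1 \le n$.

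Next I bound $\frac{1-x}{1-x/2} = 1-\frac{x/2}{1-x/2}\le 1-\frac{x}{2}\le e^{-x/2}$. Raising this to the power $T$ gives $\exp(-\cadv T/(2n))$, which is already at most the stated $2\exp(-\cadv T/(2n))$; the factor $2$ is slack. The paper's intended route may produce the factor $2$ differently, for instance by splitting at a threshold and using $1/(1-\delta_f)\le 2$ via the condition $\delta_f<1/2$. Either way, my bound implies the claim.

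A union bound then gives success probability at least $1-\delta_f-2\exp(-\cadv T/(2n))$. The only delicate point is making sure $1-\cadv/n\ge 0$ so that the ratio manipulation is legitimate, and that is handled by the observation $\cadv\le 1$. Note that $\mathbb{E}Z^2=1$ holds exactly for the uniform sphere, since $\mathbb{E}\langle\sqrt n\ba,\be^{(1)}\rangle^2=1$.
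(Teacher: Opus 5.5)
Your proof is correct. It shares the paper's skeleton: split off $\{\tau_2\le T\}$, apply Markov at threshold $(1-\cadv/(2n))^T\|\bx_0-\target\|^2$ using the expectation bound of \cref{thm: convergence in expectation-streaming}, and finish with $\frac{1-x}{1-x/2}\le 1-\frac{x}{2}\le e^{-x/2}$.

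The difference is where Markov is applied. The paper conditions on $\Omega^c=\{\tau_2>T\}$ and bounds $\mathbb{E}(\|\bX_T-\target\|^2\mid\Omega^c)=\mathbb{E}(\|\bX_T-\target\|^2\mathbf{1}_{\Omega^c})/\mathbb{P}(\Omega^c)\le 2(1-\cadv/n)^T\|\bx_0-\target\|^2$. As you guessed, that step is exactly where the factor $2$ and the hypothesis $\delta_f<\frac12$ enter. You instead apply Markov directly to the nonnegative variable $\|\bX_T-\target\|^2\mathbf{1}_{\tau_2>T}$, which avoids the conditioning altogether. You therefore get the slightly stronger success probability $1-\delta_f-\exp(-\cadv T/(2n))$, and you do not need $\delta_f<\frac12$.

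You also check that $0<\cadv/n\le\frac12$, using $\cadv\le g\le\mathbb{E}Z^2=1$ and $n\ge 2$. This makes explicit that the base $\frac{1-\cadv/n}{1-\cadv/(2n)}$ is nonnegative, which the paper uses tacitly when it raises the ratio bound to the $T$-th power.
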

\begin{proof}
  Let $\Omega=\{\tau_2 \leq T\}$. Since $\mathbb{P}(\Omega)\leq\delta_f<1/2$, \cref{thm: convergence in expectation-streaming} gives
    \begin{align*}
        \mathbb{E}\big(\|\bm{X}_T-\target\|^2 \mid \Omega^c\big)
        \leq 2\mathbb{E}\big(\|\bm{X}_T-\target\|^2 \mathbf{1}_{\Omega^c}\big)
        \leq 2 \Big(1-\frac{\cadv}{n}\Big)^T \|\bm{x}_0-\target\|^2.
    \end{align*}
    Thus, for any $\epsilon>0$, Markov's inequality implies
    \begin{align*}
        \mathbb{P}\Big(\|\bm{X}_T-\target\|^2 \ge \epsilon \|\bm{x}_0-\target\|^2\Big)
        &\leq
        \mathbb{P}\Big(\|\bm{X}_T-\target\|^2 \ge \epsilon \|\bm{x}_0-\target\|^2 \mid \Omega^c\Big)
        + \mathbb{P}(\Omega) \\
        &\leq \frac{2(1-\frac{\cadv}{n})^T}{\epsilon} + \delta_f.
    \end{align*}
    Taking $\epsilon = (1 - \frac{\cadv}{2n})^T$ and using $\ln(1-x)\leq -x$, we obtain
    \[
    \frac{2(1 - \cadv/n)^T}{\epsilon} \leq 2 \left( \frac{1 - \cadv/n}{1 - \cadv/(2n)} \right)^T \leq 2 \Big(1 - \frac{\cadv}{2n}\Big)^T \leq 2 \exp\Big(-\frac{\cadv T}{2n}\Big),
    \]
    which proves the claim.
\end{proof}

\begin{remark}[Technical comparison to previous works]\label{rem:streaming-sharper} Due to the independence between $\bX_k$ and $\bm{a}_{\updateid{k+1}}$, the key quantities in our streaming setting can be computed directly via expectations. This yields sharper constants compared to the ensemble setting analyzed in previous works, which instead control the same terms via spectral properties of the matrix $\bm{A}$ 
\cite{cai2025subsamplesizequantilebasedrandomized, steinerberger2023quantile}. For instance, in the corrupted update analysis (\cref{lem: corrupted increase}), previous works introduced $c_B := \sigma_{\max}(\tilde{\bm{A}})/\sqrt{\beta}$. %
Moreover, for the uncorrupted update analysis (\cref{lem: error decreased}), the role of an important quantity $(\inf_{\substack{S \subseteq[m]:|S| \geq \alpha_0 m}} \sigma_{\min}(\tilde{\mathbf{A}}_S))^2$ in previous works is now played by a simple expectation $\truncZsq$.
Interestingly, what \cite{steinerberger2023quantile} treats as a limiting assumption for the uniform minimum singular value as $m \rightarrow \infty$ emerges naturally in our streaming setting as an exact consequence of expectation computations.

\end{remark}

\subsubsection{Maximum Tolerable Corruption Level and Minimum Subsample Size}

In light of the convergence results (\cref{thm: convergence in expectation-streaming,cor: high probability bound}), two fundamental questions arise: what is the maximum tolerable corruption level $\beta^*$, and, for $\beta<\beta^*$, what is the minimum required subsample size $D^*$? 
Both questions reduce to the feasibility of a triplet $(D,\alpha,\alpha')$ in the domain
\begin{equation}
    D \in \mathbb{Z}_{\ge 1}, \qquad
    \alpha \in (0,q-\beta), \qquad
    \alpha' \in (0,1-q-\beta), \label{eq:alpha_constraints}
\end{equation}
satisfying the two core constraints from \cref{thm: convergence in expectation-streaming}, described below. 

\noindent\textbf{Positive contraction.}
The first core constraint, corresponding to \cref{eq: positive contraction} in \cref{thm: convergence in expectation-streaming}, requires the following one-step contraction rate to be positive:
\begin{equation}\label{eq:cadv_explicit}
\cadv(D,\alpha,\alpha')
:= (1-\beta)(1-p_l(D,\alpha))\,g(\Phi_{\lrr}) - \beta f(\Phi_{1-\urr}) > 0.
\end{equation}
Here, $p_l(D,\alpha)$ bounds the probability that the subsampled quantile is too small, while $g$ and $f$ quantify the respective error decrease and increase:
            \begin{align}
     p_l(D,\alpha) = \exp(-\D(q \| \beta + \alpha) D),\quad  
     g(t) := \mathbb{E}[Z^2\mathbf{1}_{|Z|\le t}],\quad
      f(\tilde{m}) = \tilde{m}^2 + 2\tilde{m}\mathbb{E}|Z|, \label{eq:p_l_constraint_g_f_definition}
            \end{align}
where $Z$ has the distribution specified in \cref{eq:Z_def}, and $\Phi$ denotes the quantile function of $|Z|$. 
Directly from the definitions, both $f$ and $g$ are nonnegative and non-decreasing on $[0, \infty)$. 

\noindent\textbf{Small failure probability.}
The second core constraint, corresponding to \cref{eq:failure-prob} in \cref{thm: convergence in expectation-streaming}, requires the probability of at least one catastrophic update over $T$ iterations to be at most a prescribed $\delta_f\in(0,\frac12)$. This yields the following lower bound on $D$ in terms of $\alpha'$:
\begin{equation}\label{eq:D_fail}
\begin{aligned}
p_{\mathrm{fail}}(D,\alpha') &:= 1 - (1 - \beta \exp ( - \D(1 - q \| \beta + \alpha') \cdot D))^T \le \delta_f  \\
&\Leftrightarrow D \ge D_{\mathrm{fail}}(\alpha')
:= \frac{-\log\bigl(\frac{1-(1-\delta_f)^{1/T}}{\beta}\bigr)}{\D(1-q \| \beta+\alpha')}.
\end{aligned}
    \end{equation}

Together, the domain condition and the two core constraints define the following feasible sets.
\begin{definition}[Feasible sets]\label{def: feasible sets}
Fix an iteration number $T$, quantile parameter $q\in(0,1)$, corruption level $\beta\in(0,1)$, and failure-probability tolerance $\delta_f\in(0,\frac12)$. Define the set of triplets satisfying the domain condition \cref{eq:alpha_constraints} and the two core constraints \cref{eq:cadv_explicit,eq:D_fail} by
\begin{equation}\label{eq:Omega_sol}
\Omega_{\mathrm{sol}}
 := \Bigl\{(D,\alpha,\alpha')\in
\mathbb{Z}_{\ge 1}\times(0,q-\beta)\times(0,1-q-\beta):
D\geq D_{\mathrm{fail}}(\alpha'),\;
\cadv(D,\alpha,\alpha')>0
\Bigr\}.
    \end{equation}
The corresponding set of feasible subsample sizes is its projection onto the $D$ coordinate:
\begin{equation}\label{eq:Omega_D}
\Omega_D
:= \Bigl\{D \in \mathbb{Z}_{\ge 1} :
\exists\,\alpha,\alpha' \text{ such that }
(D,\alpha,\alpha') \in \Omega_{\mathrm{sol}}\Bigr\}.
\end{equation}
\end{definition}
The set $\Omega_D$ contains the subsample sizes that ensure algorithmic success, namely, linear convergence with high probability. 
More precisely, for every $D\in\Omega_D$ with a feasible pair $(\alpha,\alpha')$, \cref{cor: high probability bound} yields
\[
    \|\bm{X}_T-\target\|^2
    \leq \left(1-\frac{\cadv(D,\alpha,\alpha')}{2n}\right)^T
    \|\bm{x}_0-\target\|^2
\]
with probability at least
\[
1-\delta_f-2\exp\left(-\frac{\cadv(D,\alpha,\alpha')T}{2n}\right).
\]

Intuitively, $\Omega_{\mathrm{sol}}$ and $\Omega_D$ may become empty once $\beta$ exceeds a threshold. 
To characterize this threshold, we first use the small failure probability constraint \cref{eq:D_fail} to control the required subsample size and then examine the boundary behavior of the positive contraction constraint \cref{eq:cadv_explicit}.
A convenient sufficient condition for satisfying \cref{eq:D_fail} is
\begin{equation}
    D \geq \frac{\ln(T)+\ln(1/\delta_f)-\ln(1/\beta)}{\D(1-q \| \beta+\alpha')}. \label{eq:D_constraint_failure}
\end{equation}
When $D$ is sufficiently large and remains logarithmic in $T$, \cref{eq:p_l_constraint_g_f_definition} and \cref{eq:D_constraint_failure} allow us to make $p_l$ arbitrarily small while keeping $(\alpha,\alpha')$ in the open ranges \cref{eq:alpha_constraints}.
    We therefore consider the boundary case $p_l=0$ in the contraction inequality \cref{eq:cadv_explicit}.
    In \cref{eq:cadv_explicit}, increasing $\alpha$ raises $\lrr$ and hence $g(\Phi_{\lrr})$; increasing $\alpha'$ raises $\urr$, thereby lowering $\Phi_{1-\urr}$ and the corrupted penalty $f(\Phi_{1-\urr})$ ($\Phi$, $g$, and $f$ are non-decreasing; see \cref{eq:p_l_constraint_g_f_definition}).
    Therefore, the optimistic envelope for $\cadv(D,\alpha,\alpha')$ at $p_l=0$ is approached as $(\alpha,\alpha')\to(q-\beta,1-q-\beta)$.
    Substituting these values into \cref{eq:cadv_explicit} defines the optimistic envelope for $\cadv$:
    \begin{equation} \label{eq:F_definition}
        \cadv(D,\alpha,\alpha') \leq F(q,\beta) := (1 - \beta) g(\Phi_{\frac{q - \beta}{1 - \beta}}) - \beta f(\Phi_{1 - \frac{1 - q - \beta}{1 - \beta}}).
    \end{equation}
    Given $q\in(0,1)$, $F(q,\beta)$ is defined for $\beta\in[0,\min\{q,1-q\})$. 
    The following proposition defines the maximum tolerable corruption level $\beta^*$ through the range where $F(q,\beta)$ is positive.
    \begin{prop}[Maximum tolerable corruption level]
    \label{prop: maximum corruption level}
     Given $q\in(0,1)$, $F(q,\beta)$ is non-increasing for $\beta\in[0,\min\{q,1-q\})$. Define the maximum tolerable corruption level by
    \begin{equation}\label{eq:beta_star_definition}
        \beta^*(q) := \sup\bigl\{\beta\in[0,\min\{q,1-q\}):F(q,\beta)>0\bigr\}.
    \end{equation}
    Then $\beta^*>0$ and $F(q,\beta)>0$ for every $\beta\in[0,\beta^*)$, and $\Omega_D$ defined in \cref{eq:Omega_D} is empty for every $\beta>\beta^*$.
    \end{prop}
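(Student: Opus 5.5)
Our plan is to prove the three claims in turn: monotonicity of $F(q,\cdot)$, positivity of $\beta^*$, and emptiness of $\Omega_D$ above $\beta^*$.

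\textbf{Monotonicity.} We write $F(q,\beta)=(1-\beta)\,g(\Phi_{a(\beta)})-\beta\, f(\Phi_{b(\beta)})$ with
\[
a(\beta)=\frac{q-\beta}{1-\beta}, \qquad b(\beta)=1-\frac{1-q-\beta}{1-\beta}=\frac{q}{1-\beta}.
\]
Since $\frac{d}{d\beta}\frac{q-\beta}{1-\beta}=\frac{q-1}{(1-\beta)^2}<0$, the level $a(\beta)$ decreases in $\beta$. As $\Phi$ and $g$ are non-decreasing, $g(\Phi_{a(\beta)})$ is non-increasing, and so is the nonnegative product $(1-\beta)g(\Phi_{a(\beta)})$. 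The level $b(\beta)$ increases in $\beta$, so $\Phi_{b(\beta)}$ and $f(\Phi_{b(\beta)})\ge 0$ are non-decreasing. Hence $\beta f(\Phi_{b(\beta)})$, a product of nonnegative non-decreasing factors, is non-decreasing. Subtracting gives that $F(q,\cdot)$ is non-increasing on $[0,\min\{q,1-q\})$.

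\textbf{$\beta^*>0$ and positivity below $\beta^*$.} At $\beta=0$ we have $F(q,0)=g(\Phi_q)=\mathbb{E}[Z^2\mathbf 1_{|Z|\le\Phi_q}]>0$, because $q>0$ and $|Z|$ has a continuous distribution with positive density near the origin. We then argue continuity of $F(q,\cdot)$ at $0$: $\Phi_\alpha$ is continuous in $\alpha$ (the distribution of $|Z|$ has positive density on its support), and $g,f$ are continuous. Near $\beta=0$ the levels $b(\beta)$ stay bounded away from $1$, so $\Phi_{b(\beta)}$ and hence $f(\Phi_{b(\beta)})$ stay finite. Therefore $F(q,\beta)>0$ for small $\beta>0$, which gives $\beta^*>0$. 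For $\beta<\beta^*$, the definition of the supremum provides some $\beta'\in(\beta,\beta^*]$ with $F(q,\beta')>0$, and monotonicity gives $F(q,\beta)\ge F(q,\beta')>0$.

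\textbf{Emptiness for $\beta>\beta^*$.} Take $\beta>\beta^*$. If $\beta\ge\min\{q,1-q\}$, one of the intervals $(0,q-\beta)$ and $(0,1-q-\beta)$ is empty, so $\Omega_{\mathrm{sol}}$ is empty. Otherwise $F(q,\beta)\le 0$ by the definition of $\beta^*$ together with monotonicity. We must then show $\cadv(D,\alpha,\alpha')\le F(q,\beta)$ for every admissible triplet, which is the envelope inequality \cref{eq:F_definition}. This follows from three observations:
\begin{itemize}
\item $1-p_l\le 1$ and $g\ge 0$;
\item $\alpha<q-\beta$ gives $\lrr<\frac{q-\beta}{1-\beta}$, hence $g(\Phi_{\lrr})\le g(\Phi_{\frac{q-\beta}{1-\beta}})$;
\item $\alpha'<1-q-\beta$ gives $1-\urr>1-\frac{1-q-\beta}{1-\beta}$, hence $f(\Phi_{1-\urr})\ge f(\Phi_{1-\frac{1-q-\beta}{1-\beta}})$.
\end{itemize}
Combining these, $\cadv\le F(q,\beta)\le 0$, so the positive-contraction constraint fails for every triplet. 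Thus $\Omega_{\mathrm{sol}}$, and with it $\Omega_D$, is empty.

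The main obstacle is the continuity and positivity argument at $\beta=0$. Here we must make sure $\Phi$ is well defined and continuous for the actual (non-Gaussian, finite-$n$) law of $Z$, and in particular that the first term of $F$ does not degenerate. The monotonicity and envelope steps are routine.
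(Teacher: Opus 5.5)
Your proposal is correct and follows the paper's proof step for step. The steps are: monotonicity from the opposite movement of the two quantile levels; $F(q,0)=g(\Phi_q)>0$ plus continuity at $\beta=0$ to get $\beta^*>0$; the supremum-plus-monotonicity argument below $\beta^*$; and, above $\beta^*$, the envelope bound $\cadv\le F(q,\beta)\le 0$ together with the domain constraints. Your only additions are extra care on two points: you verify the envelope inequality \cref{eq:F_definition} explicitly where the paper cites it, and you handle the $(1-\beta)$ and $\beta$ factors in the monotonicity step.
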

    We first show that $F(q,\cdot)$ is non-increasing by examining how its gain and penalty terms vary with $\beta$. The supremum in \cref{eq:beta_star_definition} then determines the sign of $F(q,\beta)$ on either side of $\beta^*$. Finally, the upper bound \cref{eq:F_definition} and the domain constraints \cref{eq:alpha_constraints} imply that $\Omega_D$ is empty for every $\beta>\beta^*$.
\begin{proof}
    Fix $q$ and write
    \[
        F_q(\beta)
        =(1-\beta)g\left(\Phi_{\frac{q-\beta}{1-\beta}}\right)
        -\beta f\left(\Phi_{1-\frac{1-q-\beta}{1-\beta}}\right).
    \]
    As $\beta$ increases on its domain, $\frac{q - \beta}{1 - \beta}$ decreases and $1 - \frac{1 - q - \beta}{1 - \beta}$ increases. 
    Since $f$ and $g$ are nonnegative and non-decreasing, the gain term is non-increasing and the penalty term is non-decreasing. Hence, $F_q$ is non-increasing throughout $[0,\min\{q,1-q\})$.

    We next show that the range of $\beta$ where $F_q(\beta)>0$ is nonempty: it contains $[0, \epsilon)$ for some sufficiently small $\epsilon>0$, which implies $\beta^*>0$.
    First, $F_q(0)=g(\Phi_q)>0$ for every $q\in(0,1)$, since $Z$ has a continuous distribution with positive density on the interior of its support.  
     Moreover, $F_q(\beta)$ is continuous at $\beta=0$ since $\Phi$ is continuous on $(0,1)$, and $f$ and $g$ are continuous at $\Phi_q$. 
     Therefore, there exists some $\epsilon>0$ such that $F_q(\beta)>\frac{1}{2}g(\Phi_q)>0$ for all $\beta\in[0,\epsilon)$.
  
    For every $\beta\in[0,\beta^*)$, the definition of the supremum gives some $\hat\beta>\beta$ such that $F_q(\hat\beta)>0$. Monotonicity then gives $F_q(\beta)\geq F_q(\hat\beta)>0$.
    If $\beta\in(\beta^*,\min\{q,1-q\})$, then $F_q(\beta)\leq0$ by the definition of $\beta^*$. Hence, \cref{eq:F_definition} implies $\cadv\leq0$, so the positive-contraction constraint cannot hold and $\Omega_D$ is empty. For $\beta\geq\min\{q,1-q\}$, the domain constraints in \cref{eq:alpha_constraints} cannot hold, so $\Omega_D$ is again empty. Therefore, $\Omega_D$ is empty for every $\beta>\beta^*$.
    \end{proof}
     The bound \cref{eq:F_definition} for $\cadv$ is actually tight: for $\beta<\beta^*$, the next proposition shows that $\Omega_D$ is nonempty and contains some $D=O(\log T)$.
    Thus, a logarithmic subsample size suffices to guarantee high-probability convergence, and the minimum required subsample size $D^*$ is well defined.

\begin{prop}
\label{prop: explicit subsample size}
Given $q\in(0,1)$, let $\beta^*$ be defined as in
\cref{prop: maximum corruption level}. For every $\beta\in(0,\beta^*)$,
$\delta_f\in(0,\frac12)$, and integer $T\geq1$, let $\Omega_D$ be the feasible
set defined by \cref{eq:Omega_D}. Then there exist constants $C_1,C_2>0$ such
that every integer $D$ satisfying
\begin{equation}\label{eq:explicit_D_bound}
    D\geq
    \max\left\{C_1\bigl(\ln T+\ln(1/\delta_f)\bigr),\;C_2\right\}
\end{equation}
belongs to $\Omega_D$. Consequently, the minimum required subsample size
\begin{equation}\label{eq: D formula}
    D^*:=\min\Omega_D
\end{equation}
is well defined. Moreover, every integer $D\geq D^*$ belongs to $\Omega_D$.
\end{prop}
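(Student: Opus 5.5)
Fix $\beta\in(0,\beta^*)$. By \cref{prop: maximum corruption level}, $F(q,\beta)>0$. The plan is to exhibit one fixed pair $(\alpha,\alpha')$, independent of $D$ and $T$, that works for all large $D$.

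\textbf{Choosing $(\alpha,\alpha')$.} As $(\alpha,\alpha')\to(q-\beta,1-q-\beta)$ with $p_l$ set to $0$, the quantity
\[
G(\alpha,\alpha'):=(1-\beta)g(\Phi_{\alpha/(1-\beta)})-\beta f(\Phi_{1-\alpha'/(1-\beta)})
\]
tends to $F(q,\beta)$. This uses continuity of $\Phi$ on $(0,1)$ (the density of $|Z|$ is positive in the interior of its support) together with continuity of $f$ and $g$. Hence we can fix $\alpha\in(0,q-\beta)$ and $\alpha'\in(0,1-q-\beta)$ with $G(\alpha,\alpha')\ge F(q,\beta)/2>0$. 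These choices lie strictly inside the open ranges \cref{eq:alpha_constraints}, so $\beta+\alpha<q$ and $\beta+\alpha'<1-q$. Consequently the two divergences
\[
\kappa_l:=\D(q\|\beta+\alpha)>0,\qquad \kappa_u:=\D(1-q\|\beta+\alpha')>0
\]
are strictly positive, since the Bernoulli KL divergence vanishes only when its arguments coincide.

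\textbf{Positive contraction.} For this fixed pair,
\[
\cadv(D,\alpha,\alpha')=G(\alpha,\alpha')-(1-\beta)p_l\,g(\Phi_{\alpha/(1-\beta)})\ge G-p_l\,g(\Phi_{\alpha/(1-\beta)}),
\qquad p_l=e^{-\kappa_l D}.
\]
Set $C_2:=\kappa_l^{-1}\log\!\bigl(2g(\Phi_{\alpha/(1-\beta)})/G\bigr)$, or $C_2=1$ if this quantity is nonpositive. Then every $D\ge C_2$ gives $p_l\,g\le G/2$, and therefore $\cadv\ge G/2>0$. Both $C_2$ and the bound are monotone in $D$.

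\textbf{Small failure probability.} By \cref{eq:D_fail}, we need $D\ge D_{\mathrm{fail}}(\alpha')$. Using $1-(1-\delta_f)^{1/T}\ge\delta_f/T$, which follows from Bernoulli's inequality $(1-\delta_f/T)^T\ge1-\delta_f$, we get
\[
D_{\mathrm{fail}}(\alpha')\le\frac{\ln T+\ln(1/\delta_f)-\ln(1/\beta)}{\kappa_u}\le \kappa_u^{-1}\bigl(\ln T+\ln(1/\delta_f)\bigr).
\]
This is the sufficient condition \cref{eq:D_constraint_failure}, so we take $C_1:=1/\kappa_u$. Any integer $D$ satisfying \cref{eq:explicit_D_bound} then places $(D,\alpha,\alpha')$ in $\Omega_{\mathrm{sol}}$, hence $D\in\Omega_D$.

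\textbf{Consequences.} Since $\Omega_D$ is a nonempty subset of $\mathbb{Z}_{\ge1}$, the minimum $D^*$ exists by well-ordering. For the final claim, take $D\ge D^*$ and a feasible pair $(\alpha,\alpha')$ for $D^*$. Both constraints are monotone in $D$ for fixed $(\alpha,\alpha')$:
\begin{itemize}
\item $D_{\mathrm{fail}}(\alpha')$ does not depend on $D$, so $D\ge D^*\ge D_{\mathrm{fail}}(\alpha')$ still holds;
\item $p_l(D,\alpha)$ is decreasing in $D$, so $\cadv(D,\alpha,\alpha')\ge\cadv(D^*,\alpha,\alpha')>0$.
\end{itemize}
Thus $(D,\alpha,\alpha')\in\Omega_{\mathrm{sol}}$ with the same pair, and $D\in\Omega_D$.

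\textbf{Main obstacle.} The only delicate step is the first one: securing strict positivity $G(\alpha,\alpha')>0$ at an interior pair. This requires justifying continuity of $\beta$-dependent quantities such as $\Phi_{1-\alpha'/(1-\beta)}$ as the pair approaches the boundary, where $\Phi$ is evaluated near $(q-\beta)/(1-\beta)$ and $q/(1-\beta)$. These arguments stay in $(0,1)$ because $\beta<\min\{q,1-q\}$, so continuity of $\Phi$ there suffices.
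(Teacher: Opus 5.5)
Your proof is correct, and its skeleton matches the paper's: fix one interior pair $(\alpha,\alpha')$ independent of $D$, use positivity of the two Bernoulli KL divergences to get $C_2$ from the exponential decay of $p_l$ and $C_1$ from the failure constraint, then conclude with well-ordering and monotonicity in $D$.

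The genuine difference is in how you produce the interior pair. You stay at the given $\beta$ and use continuity of $\Phi$, $g$, $f$ to pass from the boundary value $F(q,\beta)>0$ to an interior pair with $G(\alpha,\alpha')\ge F(q,\beta)/2$.

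The paper avoids any limit argument. It introduces the auxiliary level $\bar\beta=(\beta+\beta^*)/2$ and sets $\bar\alpha=(1-\beta)\frac{q-\bar\beta}{1-\bar\beta}$ and $\bar\alpha'=(1-\beta)\frac{1-q-\bar\beta}{1-\bar\beta}$. With this choice the quantile arguments $\bar\alpha/(1-\beta)$ and $1-\bar\alpha'/(1-\beta)$ coincide exactly with those appearing in $F(q,\bar\beta)$. The identity $G(\bar\alpha,\bar\alpha')=F(q,\bar\beta)+(\bar\beta-\beta)\bigl[g(\cdot)+f(\cdot)\bigr]\ge F(q,\bar\beta)>0$ then needs only nonnegativity of $f,g$ and the conclusion of \cref{prop: maximum corruption level} at $\bar\beta$.

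What each route buys:
\begin{itemize}
\item The paper's route gives a closed-form interior pair and an explicit contraction floor $\frac12F(q,\bar\beta)$, depending only on $(q,\beta)$, which the subsequent remark uses.
\item Your route is existential in $(\alpha,\alpha')$ but keeps the floor in terms of $F(q,\beta)$ itself. It relies on continuity facts that do hold here: $|Z|$ has a density positive on $(0,\sqrt n)$ for every $n\ge 2$, and the paper already uses the same facts to prove $\beta^*>0$.
\item Your explicit $C_1=1/\kappa_u$, obtained via $1-(1-\delta_f)^{1/T}\ge\delta_f/T$, and your explicit formula for $C_2$ fill in steps the paper only asserts.
\end{itemize}
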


First, \cref{prop: maximum corruption level} gives $F(q,\beta)>0$ for $\beta\in(0,\beta^*)$. 
The proof chooses an auxiliary corruption level between $\beta$ and $\beta^*$
and uses it to construct a strictly interior pair $(\bar\alpha,\bar\alpha')$ and the corresponding $\cadv(D,\bar\alpha,\bar\alpha')$.
The positivity of the two associated KL divergences ensures that a constant lower bound on $D$ controls $p_l$ in \cref{eq:p_l_constraint_g_f_definition}, thus ensuring $\cadv(D,\bar\alpha,\bar\alpha') > 0$ in \cref{eq:cadv_explicit}; the failure probability is controlled by $D=O(\ln T+\ln(1/\delta_f))$ in \cref{eq:D_fail}. 
Thus, a sufficiently large $D$ of logarithmic order satisfies the two core constraints in \cref{def: feasible sets} and belongs to $\Omega_D$. Finally, the nonemptiness of $\Omega_D$ and the well-ordering principle yield $D^*=\min\Omega_D$, and the two core constraints remain satisfied as $D$ increases.

\begin{proof}
Choose
\[
    \bar\beta:=\frac{\beta+\beta^*}{2},\qquad
    \bar\alpha:=(1-\beta)\frac{q-\bar\beta}{1-\bar\beta},\qquad
    \bar\alpha':=(1-\beta)\frac{1-q-\bar\beta}{1-\bar\beta}.
\]
Since $\beta<\bar\beta<\beta^*$, \cref{prop: maximum corruption level} gives
$F(q,\bar\beta)>0$. 
The parameters $\bar\alpha$ and $\bar\alpha'$ are positive
and satisfy
\begin{align*}
    q-\beta-\bar\alpha
    =\frac{(1-q)(\bar\beta-\beta)}{1-\bar\beta}>0,\quad
    1-q-\beta-\bar\alpha'
    =\frac{q(\bar\beta-\beta)}{1-\bar\beta}>0.
\end{align*}
Thus, $(\bar\alpha,\bar\alpha')$ lies in the open domain
\cref{eq:alpha_constraints}. 
By construction,
\[
    \frac{\bar\alpha}{1-\beta}
    =\frac{q-\bar\beta}{1-\bar\beta},
    \qquad
    1-\frac{\bar\alpha'}{1-\beta}
    =1-\frac{1-q-\bar\beta}{1-\bar\beta}.
\]
Therefore, when $p_l=0$, the contraction expression in
\cref{eq:cadv_explicit} equals
\begin{align*}
    &(1-\beta)
    g\left(\Phi_{\frac{q-\bar\beta}{1-\bar\beta}}\right)
    -\beta
    f\left(\Phi_{1-\frac{1-q-\bar\beta}{1-\bar\beta}}\right)\\
    &\quad=F(q,\bar\beta)
    +(\bar\beta-\beta)
    \left[
        g\left(\Phi_{\frac{q-\bar\beta}{1-\bar\beta}}\right)
        +f\left(\Phi_{1-\frac{1-q-\bar\beta}{1-\bar\beta}}\right)
    \right]\\
    &\quad\geq F(q,\bar\beta)>0,
\end{align*}
where the last inequality follows from the nonnegativity of $f$ and $g$ in \cref{eq:p_l_constraint_g_f_definition}.
    At this point, both $\D(q \| \beta+\bar\alpha)$ and $\D(1-q \| \beta+\bar\alpha')$ are positive constants. Hence, by \cref{eq:p_l_constraint_g_f_definition}, there exists a constant $C_2>0$ such that, whenever $D\ge C_2$,
    \begin{equation} \label{eq:positive_contraction_in_proof}
         \cadv(D,\bar\alpha,\bar\alpha')
        \geq F(q,\bar\beta)
        -\exp\left(-\D(q \| \beta+\bar\alpha)D\right)
        (1-\beta)g\left(\Phi_{\frac{\bar\alpha}{1-\beta}}\right)> \frac{1}{2}F(q,\bar\beta) > 0,
    \end{equation} 
    and \cref{eq:cadv_explicit} is satisfied.
    Similarly, \cref{eq:D_fail} is satisfied when
    \[
        D\ge C_1\bigl(\ln T+\ln(1/\delta_f)\bigr)
    \]
    for some constant $C_1>0$. By \cref{def: feasible sets}, $(D,\bar\alpha,\bar\alpha')\in\Omega_{\mathrm{sol}}$ when $D\ge\max\{C_1(\ln T+\ln(1/\delta_f)),C_2\}$, proving that $\Omega_{\mathrm{sol}}$ and $\Omega_D$ are nonempty and establishing \cref{eq:explicit_D_bound}.

 Since $\Omega_D$ is a nonempty subset of $\mathbb{Z}_{\ge 1}$, it has a minimum by the well-ordering principle. Hence, $D^*=\min\Omega_D$ in \cref{eq: D formula} is well defined. Since $D^*\in\Omega_D$, there exists a feasible pair $(\alpha^*,\alpha'^*)$ such that $(D^*,\alpha^*,\alpha'^*)\in\Omega_{\mathrm{sol}}$.
    For any integer $D\ge D^*$, the constraint \cref{eq:D_fail} remains satisfied because $D\ge D^*\ge D_{\mathrm{fail}}(\alpha'^*)$. Moreover, $p_l(D,\alpha^*)$ decreases exponentially with $D$, so $\cadv(D,\alpha^*,\alpha'^*)$ increases with $D$ and thus \cref{eq:cadv_explicit} remains satisfied. As a result, $(D,\alpha^*,\alpha'^*)\in\Omega_{\mathrm{sol}}$, and therefore $D\in\Omega_D$.
\end{proof}

\begin{remark}
    \label{rem:D-star-parameters}
    Although the minimum subsample size $D^*$ in \cref{prop: explicit subsample size} is formally a function of $(q,\beta,T,\delta_f)$, its dependence on $\delta_f$ is mild. For any fixed sufficiently small $\delta_f$ (say, $\delta_f=0.01$), the term $\log(1/\delta_f)$ in the failure probability constraint \cref{eq:D_constraint_failure} is constant and hence is $o(\log T)$ as $T\to\infty$. Thus, for fixed $\delta_f$, the sufficient subsample size in \cref{eq:explicit_D_bound} remains of order $O(\log T)$.
    Moreover, for every $D$ satisfying \cref{eq:explicit_D_bound}, the construction in the proof of \cref{prop: explicit subsample size} gives
    $
        \cadv(D,\bar\alpha,\bar\alpha')
        > \frac{1}{2}F(q,\bar\beta)>0,
    $
    as in \cref{eq:positive_contraction_in_proof}, where $\bar\beta=(\beta+\beta^*(q))/2$. In particular, this lower bound depends only on $(q,\beta)$ and is independent of $(T,\delta_f)$.
    \end{remark}

\subsection{Oblivious Noise Model}

Compared with the Massart noise model analyzed in the previous section, in which the corruption values can be adversarially chosen to maximize damage, the oblivious model assumes that they are independent of everything else (though the realized values remain arbitrary); see \cref{def:sparse-corruption}.
Under the oblivious noise model, probabilities and expectations are taken with respect to the randomness of the relevant sampling rows, corruption indicators, and corruption values $(\ba_{\ell,j}, \cv_{\ell,j}, \epsilon_{\ell,j})_{j=0}^D$, where $\ell$ indexes the relevant iterations.
Unlike in the Massart case, the following successful events, corresponding to those in \cref{eq: successful event,eq: S_{k+1}^*}, are defined using the actual random quantile, without taking the supremum or infimum over the subsampled corruption values $(\epsilon_{k+1,j})_{j=1}^D$.
Given $\bx_k$, recall that $Q_{q,k+1}$ is the subsampled quantile in \cref{alg:streaming-qrk}. For $\tilde{m}>0$ and $\alpha_0\in[0,1]$, define the following successful events:
\begin{equation}\label{eq: oblivious successful event}
\begin{aligned}
S_{k+1}&:=\left\{Q_{q,k+1}\leq\tilde{m}\frac{\|\be_k\|}{\sqrt{n}}\right\}\cup\{\cv_{k+1,0}=0\}, \\
S_{k+1}^*&:=\left\{Q_{q,k+1}\geq\Phi_{\alpha_0}\frac{\|\be_k\|}{\sqrt{n}}\right\}.
\end{aligned}
\end{equation}
When $\be_k\neq0$, define the scaled quantile
\begin{equation}\label{eq: scaled quantile}
\tilde{Q}_{k+1}:=\frac{\sqrt{n}\,Q_{q,k+1}}{\|\be_k\|}.
\end{equation}
Then the events in \cref{eq: oblivious successful event} are equivalently $S_{k+1}=\{\tilde{Q}_{k+1}\leq\tilde{m}\}\cup\{\cv_{k+1,0}=0\}$ and $S_{k+1}^*=\{\tilde{Q}_{k+1}\geq\Phi_{\alpha_0}\}$.

This independence permits a unified analysis of corrupted and uncorrupted updates at the same threshold $Q_{q,k+1}$.
For simplicity, when $\be_k\neq0$, we use the scaled quantile $\tilde{Q}_{k+1}=\tilde{q}$ for both types of updates; the case $\be_k=0$ is handled separately in the proof of \cref{lem: oblivious one step}.
For corrupted updates, this yields the sharper bound $\fobv(\tilde{q})$, rather than the pessimistic adversarial bound $f(\tilde{m})$, for $\tilde{q}\leq\tilde{m}$; see \cref{lem: oblivious corrupted increase}. Consequently, the resulting joint analysis in \cref{lem: oblivious one step} yields tighter guarantees, generally allowing for a larger maximum tolerable corruption level $\beta^*_{\mathrm{obl}}$ and a smaller minimum subsample size $D^*_{\mathrm{obl}}$ than under Massart noise, which will be illustrated in \cref{sec: numerical results}.

\textbf{Corrupted update ($\cv_{k+1,0}=1$).}
We first introduce the error-increase function for the oblivious noise model; its derivation is deferred.
For $\tilde{q}\ge 0$ and $C\in\mathbb{R}$, define
\begin{equation}\label{eq:f_obv_pointwise_C}
\fobv(\tilde{q},C) := \mathbb{E}_Z\!\left[(C^2 - Z^2)\,\mathbf{1}_{|Z - C| \leq \tilde{q}}\,\mathbf{1}_{|Z| \leq |C|}\right],
\end{equation}
and set
\begin{equation} \label{eq: f_oblivious_pointwise}
\fobv(\tilde{q}) := \sup_{C \in [0,\infty)} \fobv(\tilde{q},C) = \sup_{C \in \mathbb{R}} \fobv(\tilde{q},C),
\end{equation}
where $Z$ is distributed as in \cref{eq:Z_def}.
\begin{lem}[Oblivious Corrupted Update] \label{lem: oblivious corrupted increase}
   Given $\bx_k\neq\target$, for every $0\leq\tilde{q}\leq\tilde{m}$, we have
\begin{equation}\label{eq: oblivious corrupted increase}
\mathbb{E}\left[\|\bX_{k+1}-\target\|^2\,\mathbf{1}_{S_{k+1}} \mid \bX_k=\bx_k,\, \cv_{k+1,0}=1,\, \tilde{Q}_{k+1}=\tilde{q}\right]
\le \|\be_k\|^2 + \frac{\|\be_k\|^2}{n}\,\fobv(\tilde{q}).
\end{equation}
Moreover, $\fobv(\tilde{q})$ is non-decreasing in $\tilde{q}$, satisfies $\lim_{\tilde{q}\to\infty}\fobv(\tilde{q})=+\infty$, and obeys
\[
0 \leq\fobv(\tilde{q})\le f(\tilde{q})=\tilde{q}^2+2\tilde{q}\,\mathbb{E}|Z|, \qquad \forall\,\tilde{q}\geq0.
\]
\end{lem}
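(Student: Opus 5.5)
The plan is to compute the one-step error change exactly under the corrupted update and then to use the independence of the update sample from the subsample that determines $\tilde{Q}_{k+1}$. Write $u:=\langle \ba_{k+1,0},\be_k\rangle$ and $\epsilon:=\epsilon_{k+1,0}$. The residual is $r=u-\epsilon$, and $\|\ba_{k+1,0}\|=1$, so an accepted update gives
\[
\|\be_{k+1}\|^2=\|\be_k\|^2-2ru+r^2=\|\be_k\|^2+\epsilon^2-u^2 ,
\]
and a rejected update leaves the error unchanged. Following \cref{eq:Z_def}, rotational invariance lets us write $u=Z\|\be_k\|/\sqrt n$; we also set $C:=\sqrt n\,\epsilon/\|\be_k\|$, which is well defined because $\bx_k\neq\target$. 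The acceptance condition $|r|\le Q_{q,k+1}$ then reads $|Z-C|\le\tilde{Q}_{k+1}$. Hence the increment equals
\[
\frac{\|\be_k\|^2}{n}(C^2-Z^2)\mathbf{1}_{|Z-C|\le \tilde{Q}_{k+1}} .
\]
Since $\mathbf{1}_{S_{k+1}}\le 1$ and $\|\be_k\|^2$ is deterministic given $\bx_k$, we may drop $\mathbf{1}_{S_{k+1}}$ and also the negative part of the increment. This leaves the bound
\[
\frac{\|\be_k\|^2}{n}(C^2-Z^2)\mathbf{1}_{|Z-C|\le\tilde{q}}\mathbf{1}_{|Z|\le|C|}
\]
on the event $\tilde{Q}_{k+1}=\tilde{q}$.

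Next comes the conditioning step, which I expect to be the only delicate point. Under the oblivious model, the triple $(\ba_{k+1,0},\cv_{k+1,0},\epsilon_{k+1,0})$ is independent of the subsample $(\ba_{k+1,j},\cv_{k+1,j},\epsilon_{k+1,j})_{j=1}^D$, which alone determines $\tilde{Q}_{k+1}$. Moreover, $\epsilon$ (hence $C$) is independent of $\ba_{k+1,0}$, and therefore of $Z$. So conditioning on $\cv_{k+1,0}=1$ and $\tilde{Q}_{k+1}=\tilde{q}$ leaves $Z$ with its law from \cref{eq:Z_def} and independent of $C$. Conditioning further on $C$, the expected increment is $\frac{\|\be_k\|^2}{n}\fobv(\tilde{q},C)$. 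This is at most $\frac{\|\be_k\|^2}{n}\fobv(\tilde{q})$, and averaging over $C$ gives \cref{eq: oblivious corrupted increase}. The hypothesis $\tilde{q}\le\tilde{m}$ is only needed for consistency with the event $S_{k+1}$; the bound itself does not use it. The two suprema in \cref{eq: f_oblivious_pointwise} agree because $Z\stackrel{d}{=}-Z$ gives $\fobv(\tilde{q},-C)=\fobv(\tilde{q},C)$.

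The remaining properties are elementary.
\begin{itemize}
\item \emph{Nonnegativity.} Take $C=0$; then $\fobv(\tilde{q},0)=0$ because the integrand vanishes, so $\fobv(\tilde{q})\ge 0$.
\item \emph{Monotonicity.} The integrand $(C^2-Z^2)\mathbf{1}_{|Z|\le|C|}$ is nonnegative, and $\mathbf{1}_{|Z-C|\le\tilde{q}}$ is non-decreasing in $\tilde{q}$. So each $\fobv(\cdot,C)$ is non-decreasing, and therefore so is the supremum.
\item \emph{Comparison with $f$.} On the event $\{|Z-C|\le\tilde{q}\}\cap\{|Z|\le|C|\}$ we have $|C|-|Z|\le|C-Z|\le\tilde{q}$ and $|C|+|Z|\le 2|Z|+\tilde{q}$. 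Hence $C^2-Z^2=(|C|-|Z|)(|C|+|Z|)\le\tilde{q}^2+2\tilde{q}|Z|$. Taking expectations gives $\fobv(\tilde{q},C)\le f(\tilde{q})$ uniformly in $C$.
\item \emph{Divergence.} Choose $C=\tilde{q}/2$. Then $|Z|\le C$ forces $|Z-C|\le 2C=\tilde{q}$, so
\[
\fobv(\tilde{q})\ge\mathbb{E}\bigl[(C^2-Z^2)\mathbf{1}_{|Z|\le C}\bigr]\ge C^2\,\mathbb{P}(|Z|\le C)-\mathbb{E}Z^2 .
\]
Here $\mathbb{E}Z^2=1$ and $\mathbb{P}(|Z|\le C)\to1$, so the right-hand side tends to $+\infty$ as $\tilde{q}\to\infty$.
\end{itemize}
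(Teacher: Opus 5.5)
Your proposal is correct and follows essentially the same route as the paper. The shared steps are:
\begin{itemize}
\item computing the exact increment $(C^2-Z^2)\mathbf{1}_{|Z-C|\le\tilde{q}}$ and discarding the nonpositive part on $\{|Z|>|C|\}$;
\item using the independence of $(\ba_{k+1,0},\epsilon_{k+1,0})$ from the subsample that determines $\tilde{Q}_{k+1}$;
\item bounding $C^2-Z^2\le\tilde{q}(2|Z|+\tilde{q})$ to compare with $f$.
\end{itemize}
The differences are cosmetic. You drop $\mathbf{1}_{S_{k+1}}$ via $\mathbf{1}_{S_{k+1}}\le 1$, so $\tilde{q}\le\tilde{m}$ is never used. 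For divergence you take $C=\tilde{q}/2$ with the symmetric event $\{|Z|\le C\}$, whereas the paper takes $C=\tilde{q}$ with the one-sided event $\{0\le Z\le\tilde{q}\}$.
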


\begin{proof}
On $S_{k+1}$, a corrupted update contributes only when $\tilde{Q}_{k+1}\le\tilde{m}$; we condition on $\tilde{Q}_{k+1}=\tilde{q}\le\tilde{m}$.
By \cref{def:sparse-corruption}, $\epsilon_{k+1,0}$ is independent of $\tilde{Q}_{k+1}$, so the same calculation as in \cref{lem: corrupted increase} gives
\begin{align}
&\mathbb{E}\left[\|\bX_{k+1}-\target\|^2\,\mathbf{1}_{S_{k+1}} \mid \bX_k=\bx_k,\, \cv_{k+1,0}=1,\, \tilde{Q}_{k+1}=\tilde{q},\, \epsilon_{k+1,0}=\epsilon\right]  \nonumber\\
&= \|\be_k\|^2 + \mathbb{E}\left[(\epsilon^2 - \langle \bm{a}_{\updateid{k+1}}, \be_k \rangle^2)\,\mathbf{1}_{|\langle \bm{a}_{\updateid{k+1}}, \be_k \rangle - \epsilon| \leq \frac{\tilde{q}\|\be_k\|}{\sqrt{n}}} \mid \bX_k=\bx_k,\, \cv_{k+1,0}=1,\,\tilde{Q}_{k+1}=\tilde{q},\, \epsilon_{k+1,0}=\epsilon
\right] \nonumber\\
& \stackrel{(a)}{=} \|\be_k\|^2 + \mathbb{E}\left[(\epsilon^2 - \langle \bm{a}_{\updateid{k+1}}, \be_k \rangle^2)\,\mathbf{1}_{|\langle \bm{a}_{\updateid{k+1}}, \be_k \rangle - \epsilon| \leq \frac{\tilde{q}\|\be_k\|}{\sqrt{n}}} 
\right] \nonumber\\ 
&\leq \|\be_k\|^2 + \frac{\|\be_k\|^2}{n}\,
\mathbb{E}\!\left[\left((\epsilon \tfrac{\sqrt{n}}{\|\be_k\|})^2 - Z^2\right)\mathbf{1}_{|Z - \epsilon \frac{\sqrt{n}}{\|\be_k\|}|\leq \tilde{q}}\,\mathbf{1}_{|Z| \le |\epsilon \frac{\sqrt{n}}{\|\be_k\|}|}\right],
\label{eq: error increase final}
\end{align}
where (a) uses the independence of $\ba_{k+1,0}$ from the other variables, while the last inequality uses the definition of $Z$ in \cref{eq:Z_def} and discards the nonpositive contribution on
$\left\{|Z| > \left|\epsilon \frac{\sqrt{n}}{\|\be_k\|}\right|\right\}$.
With $C=\epsilon\sqrt{n}/\|\be_k\|$, the expectation in \cref{eq: error increase final} equals $\fobv(\tilde{q},C)$, so taking the supremum over $C\in [0, +\infty)$ (using evenness of $\fobv(\tilde{q},C)$ in $C$) yields \eqref{eq: oblivious corrupted increase}.

For the properties of $\fobv$, note that $(C^2-Z^2)\mathbf{1}_{|Z|\le |C|}\ge 0$, so enlarging the acceptance set $\{|Z-C|\le\tilde{q}\}$ increases $\fobv(\tilde{q},C)$; hence $\fobv(\tilde{q},C)$ and its supremum $\fobv(\tilde{q})$ are non-decreasing in $\tilde{q}$.
For $\tilde{q}\ge 0$, on $\{|Z|\le |C|,\,|Z-C|\le\tilde{q}\}$ with $C\ge 0$ we have 
\begin{align*}
    0\le C-|Z|&\le\tilde{q}\\
    C+|Z|&\le 2|Z|+\tilde{q},
\end{align*}
hence
$C^2-Z^2\le \tilde{q}(2|Z|+\tilde{q})$.
Taking expectations gives $\fobv(\tilde{q},C)\le \tilde{q}^2+2\tilde{q}\,\mathbb{E}|Z|=f(\tilde{q})$, and therefore $\fobv(\tilde{q})\le f(\tilde{q})$.
Finally, for $C=\tilde{q}\ge 0$, the indicators $\mathbf{1}_{|Z|\le C}$ and $\mathbf{1}_{|Z-C|\le\tilde{q}}$ both equal~$1$ exactly when $0\le Z\le\tilde{q}$, so
\begin{align*}
    \fobv(\tilde{q})&\ge \fobv(\tilde{q},\tilde{q})\\
    &\ge \mathbb{E}\!\left[(\tilde{q}^2-Z^2)\mathbf{1}_{0\le Z\le\tilde{q}}\right] \\ 
&=\tilde{q}^2\,\mathbb{P}(0\le Z\le\tilde{q})-\mathbb{E}[Z^2\mathbf{1}_{0\le Z\le\tilde{q}}].
\end{align*}
Since $Z$ is distributed as in \cref{eq:Z_def}, $\mathbb{P}(0\le Z\le\tilde{q})\to \mathbb{P}(Z\ge 0)>0$ while the second term stays bounded, hence $\fobv(\tilde{q})\to+\infty$ as $\tilde{q}\to\infty$. %
\end{proof}
By contrast, in \cref{lem: corrupted increase}, the adversary may set
\begin{equation} \label{eq: adversarial worst-case noise}
\epsilon_{k+1,0} = \langle\ba_{\updateid{k+1}}, \be_k\rangle + Q_{q,k+1}\operatorname{sign}(\langle\ba_{\updateid{k+1}}, \be_k\rangle),
\end{equation}
thereby forcing the update to be accepted with the largest error increase and yielding the worst-case bound $f(\tilde{m})$.
Since $\fobv(\tilde{q})\le f(\tilde{q}) \leq f(\tilde{m})$ for $\tilde{q} \leq \tilde{m}$, the oblivious bound is never worse than the adversarial one.

\textbf{Uncorrupted update ($\cv_{k+1,0}=0$).}
Fix $\tilde{Q}_{k+1}=\tilde{q}$. Since $\cv_{k+1,0}=0$, we have $\mathbf{1}_{S_{k+1}}=1$, and the update is accepted if and only if $|Z|\leq \tilde{q}$.
On acceptance,
$
\|\be_{k+1}\|^2 = \|\be_k\|^2 - \langle \ba_{\updateid{k+1}}, \be_k\rangle^2;
$
if the update is rejected, we have $\|\be_{k+1}\|^2 = \|\be_k\|^2$.
Therefore,
\begin{equation}\label{eq: error decrease final}
\mathbb{E}\left[\left\|\boldsymbol{X}_{k+1}-\target\right\|^2  \mid \boldsymbol{X}_k=\boldsymbol{x}_k, \cv_{k+1,0} = 0, \tilde{Q}_{k+1}=\tilde{q}\right]
= \|\be_k\|^2 - \frac{\|\be_k\|^2}{n}\,\mathbb{E}\left[Z^2 \mathbf{1}_{|Z| \leq \tilde{q}}\right].
\end{equation}
We define the error decrease at threshold $\tilde{q}$ by
\begin{equation}\label{eq: g_oblivious_pointwise}
g(\tilde{q}) := \mathbb{E}\left[Z^2 \mathbf{1}_{|Z| \leq \tilde{q}}\right],
\end{equation}
which matches the Massart definition $g(\Phi_{\alpha_0})=\truncZsq$ in \cref{lem: error decreased}. 

Combining the two cases, we obtain a joint one-step bound that accounts for both corrupted and uncorrupted updates at the same threshold $Q_{q,k+1}$.
\begin{lem}[Joint one-step bound, oblivious noise] \label{lem: oblivious one step}
    Given $\bx_k$, let $\tilde{m}>0$ and $\alpha_0,p_l\in[0,1]$, and let $S_{k+1}$ and $S_{k+1}^*$ be the events in \cref{eq: oblivious successful event}. Suppose that $\mathbb{P}((S_{k+1}^*)^c)\leq p_l$ and $\Phi_{\alpha_0}\leq\tilde{m}$. Then,
    \begin{align*}
        &\mathbb{E}\!\left[\|\bX_{k+1} - \target\|^2\,\mathbf{1}_{S_{k+1}} \mid \bX_k = \bx_k\right] \\
        &\leq \|\be_k\|^2 - \frac{\|\be_k\|^2}{n}\left(\inf_{\tilde{q}\,\in\,[\Phi_{\alpha_0},\,\tilde{m}]}
        \!\bigl((1-\beta)\,g(\tilde{q}) - \beta\,\fobv(\tilde{q})\bigr)
       - (1-\beta)p_l\,g(\Phi_{\alpha_0})\right).
    \end{align*}
  
\end{lem}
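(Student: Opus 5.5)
The plan is to condition on the scaled quantile $\tilde{Q}_{k+1}=\tilde{q}$ and reduce everything to the pointwise bounds already derived. First I dispose of the degenerate case $\be_k=0$. Then every uncorrupted residual vanishes, so those updates do not move the iterate. A corrupted update either lies outside $S_{k+1}$, and hence contributes zero under $\mathbf{1}_{S_{k+1}}$, or has $Q_{q,k+1}\le 0$. In the latter case acceptance forces $|\epsilon_{k+1,0}|\le 0$, so the step is zero. Both sides of the claim are therefore $0$.

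For $\be_k\neq0$, I will use that $\tilde{Q}_{k+1}$ is a function only of $(\ba_{k+1,j},\cv_{k+1,j},\epsilon_{k+1,j})_{j=1}^D$. Under \cref{def:sparse-corruption} this block is independent of $(\ba_{k+1,0},\cv_{k+1,0},\epsilon_{k+1,0})$. By the tower property, it then suffices to bound the conditional expectation $h(\tilde{q}):=\mathbb{E}[\|\bX_{k+1}-\target\|^2\mathbf{1}_{S_{k+1}}\mid \bX_k=\bx_k,\tilde{Q}_{k+1}=\tilde{q}]$ pointwise in $\tilde{q}$, and afterwards integrate against the law of $\tilde{Q}_{k+1}$. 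In each case I split according to $\cv_{k+1,0}$, with weights $\beta$ and $1-\beta$. The uncorrupted branch is given exactly by \cref{eq: error decrease final}. The corrupted branch is handled by \cref{lem: oblivious corrupted increase} when $\tilde{q}\le\tilde{m}$.

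I then distinguish three ranges of $\tilde{q}$, writing $\Psi(\tilde{q}):=(1-\beta)g(\tilde{q})-\beta\fobv(\tilde{q})$ and $I:=\inf_{\tilde{q}\in[\Phi_{\alpha_0},\tilde{m}]}\Psi(\tilde{q})$.
\begin{itemize}
\item If $\tilde{q}\in[\Phi_{\alpha_0},\tilde{m}]$, combining the two branches directly gives $h(\tilde{q})\le\|\be_k\|^2-\frac{\|\be_k\|^2}{n}\Psi(\tilde{q})\le \|\be_k\|^2-\frac{\|\be_k\|^2}{n}I$.
\item If $\tilde{q}>\tilde{m}$, the corrupted branch lies outside $S_{k+1}$ and contributes $0\le\beta\|\be_k\|^2$. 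The uncorrupted branch yields a decrease of $(1-\beta)g(\tilde{q})$, and monotonicity of $g$ gives $(1-\beta)g(\tilde{q})\ge(1-\beta)g(\tilde{m})\ge\Psi(\tilde{m})\ge I$, using $\fobv\ge0$.
\item If $\tilde{q}<\Phi_{\alpha_0}$ (still $\le\tilde{m}$ because $\Phi_{\alpha_0}\le\tilde{m}$), I drop the nonnegative decrease $g(\tilde{q})$. I bound the corrupted increase by $\beta\fobv(\tilde{q})\le\beta\fobv(\Phi_{\alpha_0})$, using that $\fobv$ is non-decreasing by \cref{lem: oblivious corrupted increase}. Rewriting $-\beta\fobv(\Phi_{\alpha_0})=\Psi(\Phi_{\alpha_0})-(1-\beta)g(\Phi_{\alpha_0})\ge I-(1-\beta)g(\Phi_{\alpha_0})$ gives $h(\tilde{q})\le\|\be_k\|^2-\frac{\|\be_k\|^2}{n}\bigl(I-(1-\beta)g(\Phi_{\alpha_0})\bigr)$.
\end{itemize}

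Integrating over the law of $\tilde{Q}_{k+1}$, the first two ranges make up $S^*_{k+1}$ and the third is $(S^*_{k+1})^c$. This gives the bound $\|\be_k\|^2-\frac{\|\be_k\|^2}{n}\bigl(I-\mathbb{P}((S^*_{k+1})^c)(1-\beta)g(\Phi_{\alpha_0})\bigr)$. Since $g\ge0$, replacing $\mathbb{P}((S^*_{k+1})^c)$ by $p_l$ only weakens the bound, which is the claim.

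The step I expect to need the most care is the conditioning on $\tilde{Q}_{k+1}=\tilde{q}$, which is a possibly continuous random variable. I would justify it via a regular conditional distribution, or equivalently by Fubini over the independent blocks. The justification must make clear that the update row, its indicator and its corruption value keep their unconditional laws, which is exactly what \cref{lem: oblivious corrupted increase} and \cref{eq: error decrease final} assume. The $\tilde{q}<\Phi_{\alpha_0}$ branch is the other delicate point, since there the infimum does not apply directly. The monotonicity of $\fobv$ is what converts the loss there into the $-(1-\beta)p_l\,g(\Phi_{\alpha_0})$ correction.
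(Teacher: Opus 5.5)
Your proposal is correct and takes essentially the paper's route. It uses the same treatment of $\be_k=0$, the tower property over $\tilde{Q}_{k+1}$, and the identical three-range case analysis, based on monotonicity of $g$ and $\fobv$ and the rewriting $-\beta\fobv(\Phi_{\alpha_0})=\Psi(\Phi_{\alpha_0})-(1-\beta)g(\Phi_{\alpha_0})$. The only cosmetic difference is that the paper integrates first and then lower-bounds the integrand $(1-\beta)g(\tilde{Q}_{k+1})-\beta\mathbf{1}_{\tilde{Q}_{k+1}\le\tilde{m}}\fobv(\tilde{Q}_{k+1})$ pointwise, whereas you bound $h(\tilde{q})$ in each range before integrating. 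Your explicit justification of the conditioning via independence of the update block from the quantile block is a detail the paper leaves implicit.
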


\begin{proof}
    If $\be_k=0$, then on $S_{k+1}$, either the update is uncorrupted and has zero residual, or $Q_{q,k+1}=0$, in which case any accepted corrupted update also has zero residual. Thus, $\bX_{k+1}=\target$ on $S_{k+1}$, and the claim follows. Hence, assume $\be_k\neq0$ below and let $\tilde{Q}_{k+1}$ be the scaled quantile in \cref{eq: scaled quantile}.

    Decomposing the expectation by $\cv_{k+1,0}$ and using the tower property
    $\mathbb{E}=\mathbb{E}_{\tilde{Q}_{k+1}}
    \mathbb{E}[\,\cdot\mid \tilde{Q}_{k+1}]$, we apply
    \cref{eq: error decrease final,eq: g_oblivious_pointwise} for the uncorrupted case
    ($\cv_{k+1,0}=0$) and \cref{lem: oblivious corrupted increase} for the corrupted case
    ($\cv_{k+1,0}=1$):
    \begin{align}
        &\mathbb{E}\!\left[
        \|\bX_{k+1}-\target\|^2\mathbf{1}_{S_{k+1}}
        \mid \bX_k=\bx_k
        \right]
        \nonumber\\
        &\leq
        \beta\,
        \mathbb{E}\!\left[
        \mathbf{1}_{\tilde{Q}_{k+1}\leq\tilde{m}}
        \left(
        \|\be_k\|^2
        +
        \frac{\|\be_k\|^2}{n}\fobv(\tilde{Q}_{k+1})
        \right)
        \right]
        \nonumber\\
        &\qquad
        +(1-\beta)\,
        \mathbb{E}\!\left[
        \|\be_k\|^2
        -
        \frac{\|\be_k\|^2}{n}g(\tilde{Q}_{k+1})
        \right]
        \nonumber\\
        &\leq
        \|\be_k\|^2
        -
        \frac{\|\be_k\|^2}{n}
        \mathbb{E}\!\left[
        (1-\beta)g(\tilde{Q}_{k+1})
        -
        \beta\mathbf{1}_{\tilde{Q}_{k+1}\leq\tilde{m}}
        \fobv(\tilde{Q}_{k+1})
        \right].  \label{eq: error split}
    \end{align}
    Let
    \[
        c_0
        :=
        \inf_{\tilde{q}\in[\Phi_{\alpha_0},\tilde{m}]}
        \bigl((1-\beta)g(\tilde{q})-\beta\fobv(\tilde{q})\bigr).
    \]
    We lower-bound the integrand in \cref{eq: error split} pointwise.
    If $\tilde{Q}_{k+1}\in[\Phi_{\alpha_0},\tilde{m}]$, it is at least $c_0$ by definition.
    If $\tilde{Q}_{k+1}>\tilde{m}$, then, using the monotonicity and the non-negativity of $g$ and $\fobv$,
    \[
        (1-\beta)g(\tilde{Q}_{k+1})
        \geq
        (1-\beta)g(\tilde{m})-\beta\fobv(\tilde{m})
        \geq c_0.
    \]
    Finally, if $\tilde{Q}_{k+1}<\Phi_{\alpha_0}$, then $\Phi_{\alpha_0}\leq\tilde{m}$ and the monotonicity of $\fobv$ gives
    \begin{align*}
        &(1-\beta)g(\tilde{Q}_{k+1})
        -\beta\fobv(\tilde{Q}_{k+1})\\
        &\geq
        -\beta\fobv(\Phi_{\alpha_0})\\
        &=
        (1-\beta)g(\Phi_{\alpha_0})
        -\beta\fobv(\Phi_{\alpha_0})
        -(1-\beta)g(\Phi_{\alpha_0})\\
        &\geq
        c_0-(1-\beta)g(\Phi_{\alpha_0}).
    \end{align*}
    Therefore,
    \[
        (1-\beta)g(\tilde{Q}_{k+1})
        -
        \beta\mathbf{1}_{\tilde{Q}_{k+1}\leq\tilde{m}}
        \fobv(\tilde{Q}_{k+1})
        \geq
        c_0
        -
        (1-\beta)g(\Phi_{\alpha_0})
        \mathbf{1}_{\{\tilde{Q}_{k+1}<\Phi_{\alpha_0}\}}.
    \]
    Taking expectations and using
    $\mathbb{P}(\tilde{Q}_{k+1}<\Phi_{\alpha_0})\leq p_l$ gives
    \[
        \mathbb{E}\!\left[
        (1-\beta)g(\tilde{Q}_{k+1})
        -
        \beta\mathbf{1}_{\tilde{Q}_{k+1}\leq\tilde{m}}
        \fobv(\tilde{Q}_{k+1})
        \right]
        \geq
        c_0-(1-\beta)p_lg(\Phi_{\alpha_0}).
    \]
    Substituting the definition of $c_0$ into \cref{eq: error split} completes the proof.
\end{proof}

The oblivious noise model yields results parallel to \cref{thm: convergence in expectation-streaming}, \cref{prop: maximum corruption level}, and \cref{prop: explicit subsample size} for convergence, maximum tolerable corruption level, and subsample size.
\begin{thm}[Convergence, oblivious noise] \label{thm: convergence-oblivious}
    Let the setting be as in \cref{thm: convergence in expectation-streaming}, except that the corruption values $(\epsilon_{k+1,j})_{j=0}^D$ are oblivious.
    Let $\tau_2$ be the stopping time
   $
 \tau_2 =\inf \left\{t \geq 1: S_t^c\right\}
$,
where $S_t$ is the successful event defined in \cref{eq: oblivious successful event} evaluated at the iterate
$\bm{X}_{t-1}$, with $\tilde{m} = \Phi_{1 - \urr}$.
    Choose parameters $\alpha \in (0, q-\beta)$ and $\alpha' \in (0, 1-q-\beta)$, and set
    $p_l = \exp(-\D(q \| \beta + \alpha)D)$, such that
    \begin{align} \label{eq: oblivious contraction}
        \cobl(D,\alpha,\alpha') := \inf_{\tilde{q}\,\in\,[\Phi_{\lrr},\,\Phi_{1 - \urr}]}
        \!\bigl((1-\beta)\,g(\tilde{q}) - \beta\,\fobv(\tilde{q})\bigr)
        - p_l\,(1-\beta)\,g(\Phi_{\lrr}) > 0.
    \end{align}
    Here
    $$
    \fobv(t) = \sup_{C \in [0,\infty)} \mathbb{E}_Z \left[ (C^2 - Z^2) \mathbf{1}_{|Z| \leq C} \mathbf{1}_{|Z - C| \leq t} \right], \quad
    g(t) = \mathbb{E}_Z \left[ Z^2 \mathbf{1}_{|Z| \leq t} \right], \quad \forall t \in [0,\infty),
    $$
    where $Z$ is defined as in \cref{eq:Z_def}.
    Then
    $$
    \mathbb{E}\left(\left\|\bm{X}_T - \target\right\|^2 \mathbf{1}_{\tau_2 > T}\right)
    \leq
    \left(1-\frac{\cobl}{n}\right)^T
    \left\|\init - \target\right\|^2,
    $$
    and the failure probability $\mathbb{P}(\tau_2 \leq T)$ satisfies the bound in \cref{eq:failure-prob}.
\end{thm}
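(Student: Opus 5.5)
The plan follows the proof of \cref{thm: convergence in expectation-streaming}, with \cref{lem: oblivious one step} replacing the Massart combination of \cref{lem: corrupted increase,lem: error decreased}. Set $\tilde m=\Phi_{1-\urr}$ and $\alpha_0=\lrr$. The argument has three parts: bound the failure probability of $S_{k+1}$, obtain a one-step contraction, and iterate.

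\textbf{Step 1: failure probability.} Fix $\bx_k$. By \eqref{eq: quantile upper} of \cref{lem: sub quantile streaming - simple} with $\epsu=1-q-\beta-\alpha'$, the event $\{Q_{q,k+1}>\tilde m\|\be_k\|/\sqrt n\}$ has probability at most $\exp(-\D(1-q\|\beta+\alpha')D)$. Here we use $\frac{q+\epsu}{1-\beta}=1-\urr$ and $\D(q\|q+\epsu)=\D(1-q\|\beta+\alpha')$.

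\cref{lem: sub quantile streaming - simple} bounds the quantile uniformly over the subsample corruption values. It therefore applies verbatim to the actual random quantile under oblivious noise. In the oblivious case those values are also independent of $\cv_{k+1,0}$.

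Consequently $\mathbb{P}(S_{k+1}^c\mid \bX_k=\bx_k)\le\beta\exp(-\D(1-q\|\beta+\alpha')D)$. Since the fresh batch is independent of the past, the same iteration argument and Bernoulli's inequality reproduce \eqref{eq:failure-prob}.

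\textbf{Step 2: one-step contraction.} We need the two hypotheses of \cref{lem: oblivious one step}.

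\emph{Lower-tail hypothesis.} By \eqref{eq: quantile lower} with $\epsl=q-\beta-\alpha$, we have $\frac{q-\beta-\epsl}{1-\beta}=\lrr$ and $\D(1-q\|1-q+\epsl)=\D(q\|\beta+\alpha)$. Hence $\mathbb{P}((S^*_{k+1})^c)\le p_l$, again uniformly over the corruption values.

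\emph{Ordering hypothesis.} We need $\Phi_{\alpha_0}\le\tilde m$, i.e.\ $\lrr\le 1-\urr$. This is equivalent to $\alpha+\alpha'\le 1-\beta$, which holds because $\alpha<q-\beta$ and $\alpha'<1-q-\beta$. Monotonicity of $\Phi$ then gives the ordering.

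\cref{lem: oblivious one step} now yields
\[
\mathbb{E}\bigl[\|\bX_{k+1}-\target\|^2\mathbf{1}_{S_{k+1}}\mid \bX_k=\bx_k\bigr]\le \Bigl(1-\frac{\cobl}{n}\Bigr)\|\be_k\|^2 .
\]
The constant in the lemma is exactly $\cobl(D,\alpha,\alpha')$ from \eqref{eq: oblivious contraction}.

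\textbf{Step 3: iteration.} On $\{\tau_2>T\}$ all the events $S_1,\dots,S_T$ hold. We then apply the induction of \cite[Lemma 3.5]{cai2025subsamplesizequantilebasedrandomized}, as in the Massart proof: $\mathbf{1}_{\tau_2>k+1}=\mathbf{1}_{\tau_2>k}\mathbf{1}_{S_{k+1}}$, with $\mathbf{1}_{\tau_2>k}$ measurable with respect to the past. Conditioning on $\bX_k$ and using the one-step bound gives
\[
\mathbb{E}\bigl[\|\bX_{k+1}-\target\|^2\mathbf{1}_{\tau_2>k+1}\bigr]\le\Bigl(1-\frac{\cobl}{n}\Bigr)\mathbb{E}\bigl[\|\bX_k-\target\|^2\mathbf{1}_{\tau_2>k}\bigr].
\]
Unrolling this $T$ times gives the stated bound.

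\textbf{Main obstacle.} The delicate point is the conditioning in the iteration. Under oblivious noise, $S_{k+1}$ involves the realized random quantile rather than its supremum. We must check two things. First, conditional on $\bX_k$, the batch $(\ba_{k+1,j},\cv_{k+1,j},\epsilon_{k+1,j})_{j=0}^D$ is independent of $\mathcal F_k$. Second, within that batch the update-row variables are independent of $\tilde Q_{k+1}$, which is what \cref{lem: oblivious one step} uses. Both follow from \cref{def:sparse-corruption}. Once they are checked, the remaining steps are routine parameter identifications.
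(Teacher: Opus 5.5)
Your proposal is correct and follows the paper's proof. You apply \cref{lem: oblivious one step} with $\alpha_0=\frac{\alpha}{1-\beta}$ and $\tilde m=\Phi_{1-\frac{\alpha'}{1-\beta}}$, reuse \cref{lem: sub quantile streaming - simple} (which is uniform in the corruption values) for both $p_l$ and the failure-probability bound, and iterate exactly as in the Massart case. The only difference is that you make explicit what the paper leaves implicit: the ordering hypothesis $\Phi_{\alpha_0}\le\tilde m$ (via $\alpha+\alpha'<1-2\beta$), and the identities $\D(q\|q+\epsu)=\D(1-q\|\beta+\alpha')$ and $\D(1-q\|1-q+\epsl)=\D(q\|\beta+\alpha)$.
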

\begin{proof}
The proof follows the same structure as that of
\cref{thm: convergence in expectation-streaming}. Applying
\cref{lem: oblivious one step} with
$\alpha_0=\lrr$ and $\tilde{m}=\Phi_{1-\urr}$ gives the one-step
contraction \cref{eq: oblivious contraction}.
Since \cref{lem: sub quantile streaming - simple} also holds under the weaker
oblivious noise model, the same argument as in the proof of
\cref{thm: convergence in expectation-streaming} gives the
failure-probability bound \cref{eq:failure-prob}. Iterating the one-step
contraction then yields the stated convergence bound.
\end{proof}
The key difference compared to \cref{thm: convergence in expectation-streaming} is that the positive-contraction condition 
$$
\cadv = (1-\beta)\,g(\Phi_{\lrr}) - \beta\,f(\Phi_{1-\urr}) - p_l(1-\beta)\,g(\Phi_{\lrr}) > 0
$$
in \cref{eq: positive contraction} becomes the oblivious form $\cobl(D,\alpha,\alpha')$ given in \cref{eq: oblivious contraction}.
In parallel with \cref{def: feasible sets}, define the oblivious feasible sets by replacing $\cadv(D,\alpha,\alpha')>0$ with $\cobl(D,\alpha,\alpha')>0$ in \cref{eq: oblivious contraction}, while retaining the domain condition \cref{eq:alpha_constraints} and the small failure-probability constraint \cref{eq:D_fail}:
\begin{align}
\Omega_{\mathrm{sol,obl}}
&:= \Bigl\{(D,\alpha,\alpha') \in
\mathbb{Z}_{\ge 1}\times(0,q-\beta)\times(0,1-q-\beta):
D\geq D_{\mathrm{fail}}(\alpha'),\;
\cobl(D,\alpha,\alpha')>0\Bigr\}, \label{eq:Omega_sol_oblivious}\\
\Omega_{D,\mathrm{obl}}
&:=\Bigl\{D\in\mathbb{Z}_{\ge 1}:
\exists\,\alpha,\alpha'\text{ such that }
(D,\alpha,\alpha')\in\Omega_{\mathrm{sol,obl}}\Bigr\}. \label{eq:Omega_D_oblivious}
\end{align}
For every $(\alpha,\alpha')$ in the domain, the infimum interval in \cref{eq: oblivious contraction} contains
\begin{equation} \label{eq: infimum interval Ibeta}
I_\beta := \left[\Phi_{\frac{q-\beta}{1-\beta}},\,\Phi_{1-\frac{1-q-\beta}{1-\beta}}\right], \quad \beta \in [0,\min\{q,1-q\}).
\end{equation}
Hence $\cobl(D,\alpha,\alpha')$ is bounded above by evaluating the infimum on this boundary interval and setting $p_l=0$. 
In parallel with \cref{eq:F_definition}, we obtain an upper bound on $\cobl$:
\begin{align}
\cobl(D,\alpha,\alpha') &\leq \inf_{\tilde{q}\,\in\,[\Phi_{\frac{q-\beta}{1-\beta}},\,\Phi_{1-\frac{1-q-\beta}{1-\beta}}]}
\!\bigl((1-\beta)\,g(\tilde{q}) - \beta\,\fobv(\tilde{q})\bigr) - p_l\,(1-\beta)\,g(\Phi_{\lrr}) \nonumber\\
&\leq \inf_{\tilde{q}\,\in\,[\Phi_{\frac{q-\beta}{1-\beta}},\,\Phi_{1-\frac{1-q-\beta}{1-\beta}}]}
\!\bigl((1-\beta)\,g(\tilde{q}) - \beta\,\fobv(\tilde{q})\bigr) =: F_{\mathrm{obl}}(q,\beta). \label{eq:F_obl_definition}
\end{align}
We next compare the oblivious and Massart contraction rates. Specifically, for all $(D,\alpha,\alpha')$ in the domain,
\begin{equation}\label{eq: comparison}
\cobl(D,\alpha,\alpha') \geq \cadv(D,\alpha,\alpha'), 
\quad F_{\mathrm{obl}}(q,\beta) \geq F(q,\beta).
\end{equation}
Both inequalities follow from the monotonicity of $g$ and $\fobv$ and the bound $\fobv(t)\leq f(t)$: for $0\leq l\leq r$ and $t\in[l,r]$,
\[
g(t) \geq g(l)
\quad\text{and}\quad
\fobv(t) \leq \fobv(r) \leq f(r).
\]
In the first inequality, take $l=\Phi_{\lrr}$ and $r=\Phi_{1-\urr}$; in the second, take $l$ and $r$ to be the endpoints of $I_\beta$ in \cref{eq: infimum interval Ibeta}.
With the optimistic envelope $F_{\mathrm{obl}}$ in \cref{eq:F_obl_definition} and the comparison \cref{eq: comparison} in place, we characterize the maximum tolerable corruption level under oblivious noise in parallel with \cref{prop: maximum corruption level}.
\begin{prop}
[Maximum tolerable corruption level, oblivious noise] \label{prop: maximum corruption level-oblivious}
Given $q\in(0,1)$, $F_{\mathrm{obl}}(q,\beta)$ is non-increasing for $\beta\in[0,\min\{q,1-q\})$. Define the maximum tolerable corruption level by
\begin{equation}\label{eq:beta_star_definition_oblivious}
    \beta^*_{\mathrm{obl}}(q)
    :=\sup\bigl\{\beta\in[0,\min\{q,1-q\}):F_{\mathrm{obl}}(q,\beta)>0\bigr\}.
\end{equation}
Then $\beta^*_{\mathrm{obl}}(q)>0$ and $F_{\mathrm{obl}}(q,\beta)>0$ for every $\beta\in[0,\beta^*_{\mathrm{obl}}(q))$, and $\Omega_{D,\mathrm{obl}}$ defined in \cref{eq:Omega_D_oblivious} is empty for every $\beta>\beta^*_{\mathrm{obl}}(q)$.
\end{prop}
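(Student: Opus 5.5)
The plan follows the proof of \cref{prop: maximum corruption level}, with one change: $F_{\mathrm{obl}}$ is an infimum over the moving interval $I_\beta$ of \cref{eq: infimum interval Ibeta}, not a difference of two fixed evaluations. There are three steps.

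\textbf{Step 1: monotonicity.} Write $h_\beta(\tilde q):=(1-\beta)g(\tilde q)-\beta\fobv(\tilde q)$, so that $F_{\mathrm{obl}}(q,\beta)=\inf_{\tilde q\in I_\beta}h_\beta(\tilde q)$. For fixed $\tilde q$, both $-g(\tilde q)$ and $-\fobv(\tilde q)$ are nonpositive, by nonnegativity of $g$ and \cref{lem: oblivious corrupted increase}. Hence $\partial_\beta h_\beta(\tilde q)=-g(\tilde q)-\fobv(\tilde q)\le 0$, and $h_\beta$ is pointwise non-increasing in $\beta$. As $\beta$ grows, the left endpoint $\Phi_{(q-\beta)/(1-\beta)}$ decreases and the right endpoint $\Phi_{1-(1-q-\beta)/(1-\beta)}$ increases, because $\Phi$ is non-decreasing. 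So the intervals are nested: $I_\beta\subseteq I_{\beta'}$ whenever $\beta\le\beta'$. Taking an infimum of a pointwise smaller function over a larger set gives $F_{\mathrm{obl}}(q,\beta')\le F_{\mathrm{obl}}(q,\beta)$.

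\textbf{Step 2: positivity near $\beta=0$, so $\beta^*_{\mathrm{obl}}(q)>0$.} This is the step I expect to be the main obstacle. Continuity of an infimum over a moving interval is less immediate than continuity of the two point evaluations used in the Massart case. I would avoid continuity arguments entirely and use the comparison \cref{eq: comparison}, namely $F_{\mathrm{obl}}(q,\beta)\ge F(q,\beta)$. Then \cref{prop: maximum corruption level} supplies $\epsilon>0$ with $F(q,\beta)>0$ on $[0,\epsilon)$, hence $F_{\mathrm{obl}}(q,\beta)>0$ there, and also $\beta^*_{\mathrm{obl}}(q)\ge\beta^*(q)>0$. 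This uses only the already-established comparison, the pointwise bound $\fobv\le f$, and the monotonicity of $g$ and $\fobv$.

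\textbf{Step 3: the remaining claims.} These follow by copying the end of the proof of \cref{prop: maximum corruption level}.
\begin{itemize}
\item For $\beta<\beta^*_{\mathrm{obl}}(q)$: by the definition of the supremum there is $\hat\beta>\beta$ with $F_{\mathrm{obl}}(q,\hat\beta)>0$, and Step 1 gives $F_{\mathrm{obl}}(q,\beta)\ge F_{\mathrm{obl}}(q,\hat\beta)>0$.
\item For $\beta\in(\beta^*_{\mathrm{obl}}(q),\min\{q,1-q\})$: we have $F_{\mathrm{obl}}(q,\beta)\le 0$. The bound \cref{eq:F_obl_definition}, valid because $I_\beta$ lies inside every admissible infimum interval, gives $\cobl(D,\alpha,\alpha')\le 0$ for every triplet in the domain, so $\Omega_{D,\mathrm{obl}}=\emptyset$.
\item For $\beta\ge\min\{q,1-q\}$: the domain \cref{eq:alpha_constraints} is empty, so again $\Omega_{D,\mathrm{obl}}=\emptyset$.
\end{itemize}
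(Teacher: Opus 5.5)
Your proposal is correct and follows essentially the same route as the paper. It uses nested intervals $I_\beta$ together with pointwise monotonicity of $(1-\beta)g-\beta\fobv$ in $\beta$ to get the non-increasing property, the comparison $F_{\mathrm{obl}}\ge F$ with \cref{prop: maximum corruption level} to get $\beta^*_{\mathrm{obl}}(q)\ge\beta^*(q)>0$ (the paper likewise avoids any continuity argument there), and the same supremum argument for the remaining claims.
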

The main new point compared with \cref{prop: maximum corruption level}
is the monotonicity argument: $F_{\mathrm{obl}}$ is an infimum over an
interval that expands as $\beta$ increases.
\begin{proof}
Let
$I_\beta$ be defined as in \cref{eq: infimum interval Ibeta}.
For $\beta_1\leq\beta_2$, we have $I_{\beta_1}\subseteq I_{\beta_2}$ and
\[
    (1-\beta_2)g(t)-\beta_2\fobv(t)
    \leq(1-\beta_1)g(t)-\beta_1\fobv(t)
    \qquad (t\in I_{\beta_1}),
\]
by the nonnegativity of $g$ and $\fobv$. 
Therefore,
\[
    F_{\mathrm{obl}}(q,\beta_2)
    \leq\inf_{t\in I_{\beta_1}}
    \bigl((1-\beta_2)g(t)-\beta_2\fobv(t)\bigr)
    \leq\inf_{t\in I_{\beta_1}}
    \bigl((1-\beta_1)g(t)-\beta_1\fobv(t)\bigr)
    =F_{\mathrm{obl}}(q,\beta_1),
\]
so $F_{\mathrm{obl}}(q,\cdot)$ is non-increasing.

Since $\beta^*(q)>0$ and $F(q,\beta)>0$ for every
$\beta\in[0,\beta^*(q))$ by \cref{prop: maximum corruption level}, and
$F_{\mathrm{obl}}(q,\beta)\geq F(q,\beta)$ by \cref{eq: comparison}, we have
\[
    [0,\beta^*(q))
    \subseteq
    \{\beta:F_{\mathrm{obl}}(q,\beta)>0\}.
\]
Therefore,
\[
    \beta^*_{\mathrm{obl}}(q)\geq \beta^*(q)>0.
\]
The same supremum argument as in
the proof of \cref{prop: maximum corruption level} gives 
$F_{\mathrm{obl}}(q,\beta)>0$ for every $\beta \in [0,\beta^*_{\mathrm{obl}}(q))$, and
$\Omega_{D,\mathrm{obl}}=\varnothing$ for every
$\beta>\beta^*_{\mathrm{obl}}(q)$.
\end{proof}

The next result is parallel to \cref{prop: explicit subsample size}. The only new step is to compare the oblivious contraction over the entire quantile interval; the failure-probability and minimum-size arguments are unchanged.
\begin{prop}[Explicit subsample size, oblivious noise]
\label{prop: explicit subsample size-oblivious}
Given $q\in(0,1)$, let $\beta^*_{\mathrm{obl}}(q)$ be defined as in
\cref{prop: maximum corruption level-oblivious}. For every
$\beta\in(0,\beta^*_{\mathrm{obl}}(q))$, $\delta_f\in(0,\frac12)$, and
integer $T\geq1$, there exist constants $C_1,C_2>0$ such that every integer
$D$ satisfying
\begin{equation}\label{eq:explicit_D_bound_oblivious}
    D\geq
    \max\left\{C_1\bigl(\ln T+\ln(1/\delta_f)\bigr),\;C_2\right\}
\end{equation}
belongs to $\Omega_{D,\mathrm{obl}}$. Consequently, the minimum required
subsample size
\begin{equation}\label{eq: D formula oblivious}
    D^*_{\mathrm{obl}}:=\min\Omega_{D,\mathrm{obl}}
\end{equation}
is well defined. Moreover, every integer $D\geq D^*_{\mathrm{obl}}$ belongs
to $\Omega_{D,\mathrm{obl}}$.
\end{prop}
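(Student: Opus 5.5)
I will mirror the proof of \cref{prop: explicit subsample size}, replacing the Massart envelope $F$ by $F_{\mathrm{obl}}$. Fix $\beta\in(0,\beta^*_{\mathrm{obl}}(q))$ and set $\bar\beta:=(\beta+\beta^*_{\mathrm{obl}}(q))/2$, together with
\[
\bar\alpha:=(1-\beta)\frac{q-\bar\beta}{1-\bar\beta},\qquad \bar\alpha':=(1-\beta)\frac{1-q-\bar\beta}{1-\bar\beta}.
\]
Exactly the same computation as before shows that $(\bar\alpha,\bar\alpha')$ lies strictly inside the domain \cref{eq:alpha_constraints}. It also gives $\frac{\bar\alpha}{1-\beta}=\frac{q-\bar\beta}{1-\bar\beta}$ and $1-\frac{\bar\alpha'}{1-\beta}=1-\frac{1-q-\bar\beta}{1-\bar\beta}$. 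Consequently, the infimum interval in \cref{eq: oblivious contraction} for the pair $(\bar\alpha,\bar\alpha')$ is exactly $I_{\bar\beta}$ from \cref{eq: infimum interval Ibeta}. By \cref{prop: maximum corruption level-oblivious}, $F_{\mathrm{obl}}(q,\bar\beta)>0$.

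The new step is to compare the oblivious contraction over this whole interval. Pointwise for $t\in I_{\bar\beta}$, since $\beta<\bar\beta$ and $g,\fobv\ge 0$,
\[
(1-\beta)g(t)-\beta\fobv(t)=(1-\bar\beta)g(t)-\bar\beta\fobv(t)+(\bar\beta-\beta)\bigl(g(t)+\fobv(t)\bigr)\ge (1-\bar\beta)g(t)-\bar\beta\fobv(t).
\]
Taking the infimum over $t\in I_{\bar\beta}$ gives, at $p_l=0$, a value of at least $F_{\mathrm{obl}}(q,\bar\beta)>0$. Hence
\[
\cobl(D,\bar\alpha,\bar\alpha')\ge F_{\mathrm{obl}}(q,\bar\beta)-\exp\bigl(-\D(q\|\beta+\bar\alpha)D\bigr)(1-\beta)g\bigl(\Phi_{\bar\alpha/(1-\beta)}\bigr).
\]
Because $\beta+\bar\alpha<q$, the KL divergence is a positive constant, so there is a constant $C_2$ with $\cobl>\frac12F_{\mathrm{obl}}(q,\bar\beta)>0$ for all $D\ge C_2$. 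The main thing to check is this infimum comparison, which replaces the endpoint evaluation used in the Massart case. The pointwise inequality above handles it, and no continuity of $\fobv$ is needed.

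The failure-probability constraint \cref{eq:D_fail} does not involve the noise model. Since $\beta+\bar\alpha'<1-q$, the quantity $\D(1-q\|\beta+\bar\alpha')$ is a positive constant, and \cref{eq:D_constraint_failure} (using $\ln(1/\beta)\ge0$) shows that $D\ge C_1(\ln T+\ln(1/\delta_f))$ suffices. So $(D,\bar\alpha,\bar\alpha')\in\Omega_{\mathrm{sol,obl}}$ whenever \cref{eq:explicit_D_bound_oblivious} holds. This makes $\Omega_{D,\mathrm{obl}}$ nonempty, and well-ordering gives that $D^*_{\mathrm{obl}}$ is well defined.

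For upward closure, take a feasible pair $(\alpha^*,\alpha'^*)$ for $D^*_{\mathrm{obl}}$ and any $D\ge D^*_{\mathrm{obl}}$. First, $D\ge D_{\mathrm{fail}}(\alpha'^*)$ still holds. Second, the infimum term in $\cobl$ does not depend on $D$, while $p_l(D,\alpha^*)$ decreases in $D$. So $\cobl(D,\alpha^*,\alpha'^*)$ is non-decreasing in $D$ and stays positive, which gives $D\in\Omega_{D,\mathrm{obl}}$.
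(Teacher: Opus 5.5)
Your proposal is correct and follows the paper's proof essentially step for step: the same choice of $\bar\beta$ and $(\bar\alpha,\bar\alpha')$, the identification of the infimum interval with $I_{\bar\beta}$, and the pointwise decomposition giving $\inf_{t\in I_{\bar\beta}}\bigl((1-\beta)g(t)-\beta\fobv(t)\bigr)\ge F_{\mathrm{obl}}(q,\bar\beta)>0$. The remaining steps (exponential decay of $p_l$, the failure-probability bound, well-ordering, and monotonicity in $D$) also match; you spell out details the paper defers to the Massart case, such as using $\ln(1/\beta)\ge 0$ in the sufficient condition and noting that the infimum term does not depend on $D$.
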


\begin{proof}
Choose $\bar\beta=(\beta+\beta^*_{\mathrm{obl}}(q))/2$, and define
$(\bar\alpha,\bar\alpha')$ as in the proof of
\cref{prop: explicit subsample size}. The same calculation shows that this
pair lies in the open domain \cref{eq:alpha_constraints}, its two associated
KL divergences are positive, and its quantile interval is $I_{\bar\beta}$ defined as in \cref{eq: infimum interval Ibeta}.
The only difference from the Massart case is the following pointwise comparison:
for every $t\in I_{\bar\beta}$,
\[
    (1-\beta)g(t)-\beta\fobv(t)
    =(1-\bar\beta)g(t)-\bar\beta\fobv(t)
    +(\bar\beta-\beta)\bigl(g(t)+\fobv(t)\bigr).
\]
Since $\bar\beta<\beta^*_{\mathrm{obl}}(q)$, taking the infimum over
$I_{\bar\beta}$ gives
\[
    \inf_{t\in I_{\bar\beta}}
    \bigl((1-\beta)g(t)-\beta\fobv(t)\bigr)
    \geq F_{\mathrm{obl}}(q,\bar\beta)>0.
\]

The remainder follows exactly as in the proof of
\cref{prop: explicit subsample size}: exponential decay of $p_l$ and the
failure-probability bound yield \cref{eq:explicit_D_bound_oblivious}, and the
well-ordering and monotonicity arguments yield the properties of
$D^*_{\mathrm{obl}}$.

\end{proof}

\section{Numerical Results} \label{sec: numerical results}

This section has two parts. First, we describe how to compute the maximum tolerable corruption level $\beta^*$ and the required subsample size $D^*$. We then compare these theoretical bounds with numerical simulations. Code for the numerical computations and simulations is available at \url{https://github.com/wtree101/Explicit-beta-and-D-for-QRK}.

\subsection{Computation of \texorpdfstring{$\beta^*$ and $D^*$}{beta* and D*}} \label{sec: explicit D}

We first introduce a practical method to compute the maximum tolerable corruption level $\beta^* = \beta^*(q)$ and the required subsample size $D^* = D^*(q, \beta, T,\delta_f)$.
For simplicity, we describe the procedure under the Massart noise model, but the same approach applies to the oblivious noise model with minor modifications.

To make these constants explicit and computable, we utilize normal approximations for the quantiles and moments of $Z$, 
where $Z=\langle\sqrt{n}\bm{a},\bm{e}^{(1)}\rangle$ is defined in \cref{eq:Z_def}.
 For large $n$, $Z$ is approximately distributed as $\mathcal{N}(0,1)$. 
 Accordingly, we use the following standard normal approximations:
\begin{itemize}
    \item The $a$-quantile of $|Z|$ is approximated by $\Phi_a$, the $a$-quantile of the half-normal distribution $|\mathcal{N}(0,1)|$, i.e., $\mathbb{P}(|\mathcal{N}(0,1)|\leq\Phi_{a}) = a$. 
    \item The expected value $\mathbb{E}[|Z|] \approx \sqrt{2/\pi} \approx 0.798$.
    \item The truncated second moment $g(\Phi_{\alpha_0}) =  \mathbb{E}\left[Z^2 \mathbf{1}_{|Z| \leq \Phi_{\alpha_0}}  \right] \approx \frac{1}{\sqrt{2 \pi}} \int_{-\Phi_{\alpha_0}}^{\Phi_{\alpha_0}} x^2 e^{-x^2 / 2} dx$. 
\end{itemize}

These approximations enable explicit, computable expressions for the error increase function $f(\cdot)$, the error decrease function $g(\cdot)$, and the contraction factor $\cadv$ in \cref{eq: positive contraction}. 
Using these expressions, we compute $\beta^*$ and $D^*$ as follows:

        \begin{itemize}
            \item \textbf{Finding  $\beta^*$:} For a given $q$, since $F(q,\beta)$ is non-increasing in $\beta$, we use a binary search on $\beta \in (0, \min\{q , 1 -q\})$ to locate the supremum in \cref{eq:beta_star_definition}.
            
            \item \textbf{Finding $D^*$:} For fixed $T$, $\delta_f$, and $q$, with $\beta < \beta^*(q)$, \cref{prop: explicit subsample size} guarantees that $\Omega_D$ is nonempty and that every integer $D \ge D^*$ belongs to $\Omega_D$. Thus, choosing a sufficiently large feasible upper bound $D_{\max}$ (e.g., $D_{\max}=1000$), we can use binary search to find  $D^*$ in $[1,D_{\max}]$.
            For each candidate $D$, we check whether there exist feasible $\alpha$ and $\alpha'$ in \cref{eq:alpha_constraints} satisfying both the failure probability constraint \cref{eq:D_fail} and the contraction constraint \cref{eq:cadv_explicit}. 
            This involves first determining the largest $\alpha'$ allowed by the failure probability constraint, then searching for an $\alpha$ such that the contraction rate is positive. Since $\cadv$ in \cref{eq:cadv_explicit} is not necessarily monotonic in $\alpha$, a grid search over $\alpha \in (0, q-\beta)$ can be used. 
            
        \end{itemize}

We next illustrate this procedure with representative parameter choices. To compare two noise models, we use $q=0.75$, $\delta_f=0.1$, and $T=20000$ in both cases, and consider $\beta\in(0,0.02)$ when computing $D^*$.

\paragraph{Massart noise.}
\Cref{fig:q_beta_max_massart} shows the maximum tolerable corruption level $\beta^*(q)$, whose peak reaches approximately $0.069$ at $q=0.85$. Under the parameter choices above, \cref{fig:D_beta_curve_massart} shows the required subsample size $D^*$ as a function of $\beta$. At the representative corruption level $\beta=0.01$, we obtain $D^*=25$.

\begin{figure}[H]
    \centering
    \subfloat[Maximum tolerable corruption $\beta^*(q)$.\label{fig:q_beta_max_massart}]{%
        \includegraphics[width=.45\textwidth]{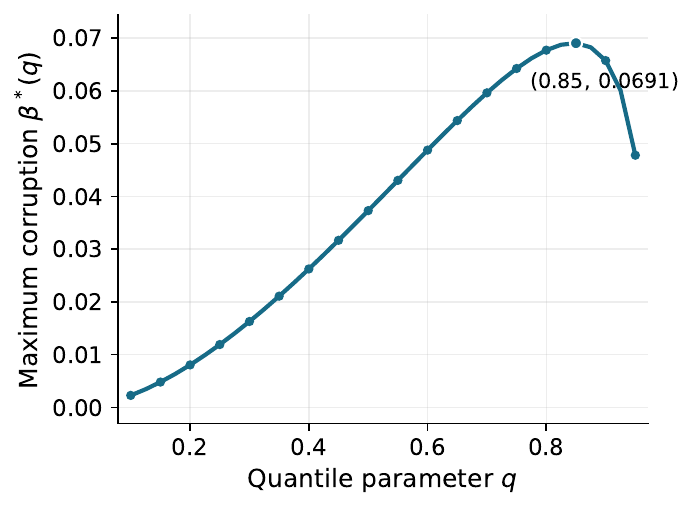}}
    \hfill
    \subfloat[Minimum subsample size $D^*(\beta)$ at $q=0.75$.\label{fig:D_beta_curve_massart}]{%
        \includegraphics[width=.45\textwidth]{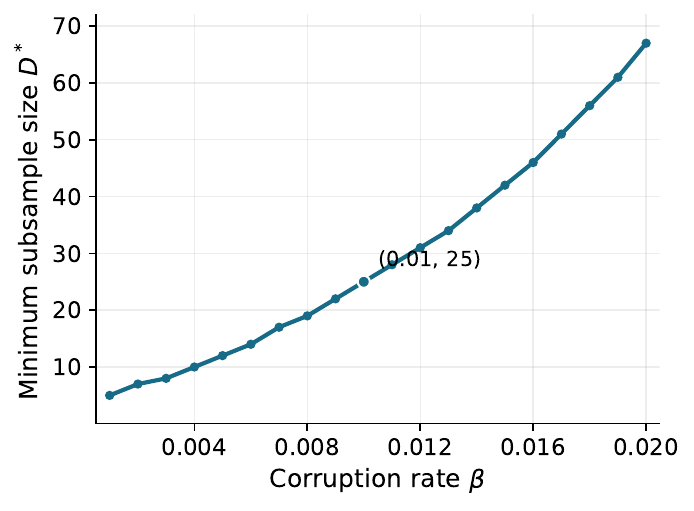}}
    \caption{Numerical bounds under Massart noise.}
    \label{fig:numerical_bounds_massart}
\end{figure}

\paragraph{Oblivious noise.}

\Cref{fig:q_beta_max_oblivious} shows the corresponding maximum tolerable corruption level $\beta^*_{\mathrm{obl}}(q)$, where the peak reaches approximately $0.320$ at $q=0.65$. Using the same parameters as in the Massart case, \cref{fig:D_beta_curve_oblivious} gives $D^*_{\mathrm{obl}}=13$ when $\beta=0.01$, compared with $D^*=25$ under Massart noise. 

\begin{figure}[H]
    \centering
    \subfloat[Maximum tolerable corruption $\beta^*_{\mathrm{obl}}(q)$.\label{fig:q_beta_max_oblivious}]{%
        \includegraphics[width=.45\textwidth]{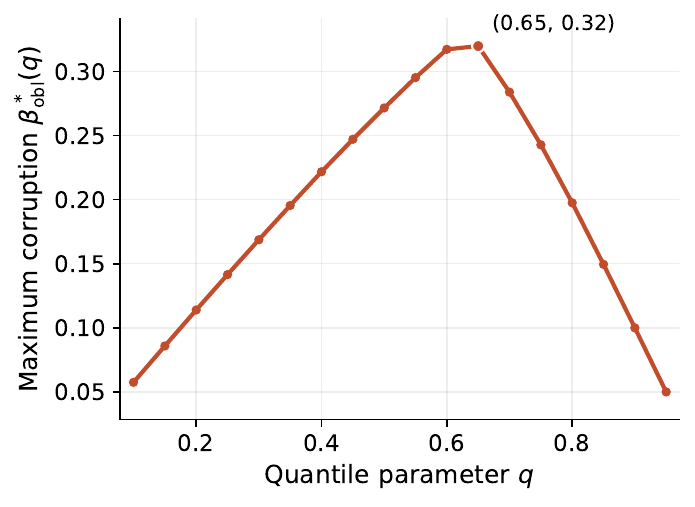}}
    \hfill
    \subfloat[Minimum subsample size $D^*_{\mathrm{obl}}(\beta)$ at $q=0.75$.\label{fig:D_beta_curve_oblivious}]{%
        \includegraphics[width=.45\textwidth]{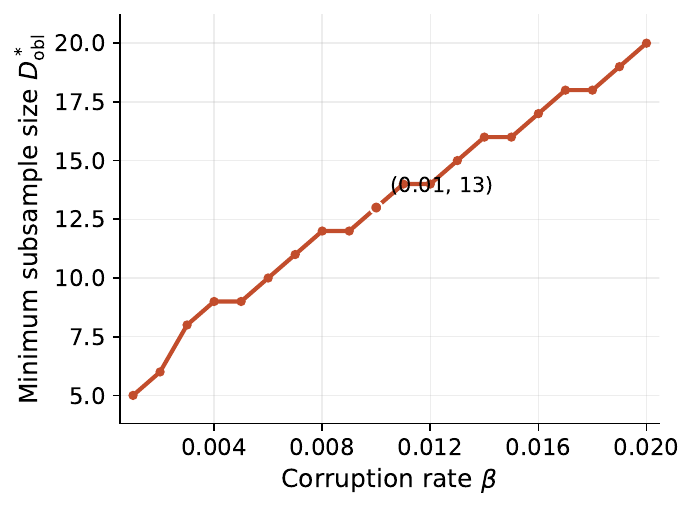}}
    \caption{Numerical bounds under oblivious noise.}
    \label{fig:numerical_bounds_oblivious}
\end{figure}

\begin{remark} \label{rem: Gaussian corruption}
    The thresholds $\beta^*$ and $D^*$ can be further improved when the distribution of the corruption is known. For Gaussian corruption, the error increase function $f_{\mathrm{obl}}(\tilde{q})$ defined through \cref{eq:f_obv_pointwise_C,eq: f_oblivious_pointwise} can be reduced by replacing the supremum over $C$ with an expectation over $C \sim \mathcal{N}(0, \sigma^2)$:
    \begin{equation*}
        f_{\mathrm{Gaussian}}(\tilde{q}) = \sup_{\sigma > 0} \int_{-\infty}^{\infty} \mathbb{E}_Z\!\left[(C^2 - Z^2)\,\mathbf{1}_{|Z - C| \leq \tilde{q}}\,\mathbf{1}_{|Z| \leq |C|}\right] \frac{1}{\sqrt{2\pi\sigma^2}}e^{-\frac{C^2}{2 \sigma^2}} \,\mathrm{d}C \leq f_{\mathrm{obl}}(\tilde{q}),
    \end{equation*}
    which yields a larger maximum tolerable corruption level $\beta^*_{\mathrm{Gaussian}}(0.60) = 0.397 > 0.320$.
\end{remark}
            
\subsection{Numerical Simulations} \label{sec: experiments}
We next examine how the subsample size $D$ affects performance and how well the model-specific theoretical bound predicts empirical success as $\beta$ and $T$ vary.

Throughout, we set $n=100$, $q=0.8$, and $\delta_f=0.1$. We use $c_{\mathrm{succ}}=0.05$ in the empirical success criterion below. This choice is representative rather than intrinsic, but it imposes a meaningful accuracy requirement over the horizons considered: the success threshold is approximately $6.7\times 10^{-3}$ at $T=10000$ and $4.5\times 10^{-5}$ at $T=20000$. A larger value of $c_{\mathrm{succ}}$ would impose an even stricter criterion.

We also specify the corruption values used in the simulations. Under the Massart noise model, we set $\epsilon_{k+1,j}=10^{15}$ for $j=1,\dots,D$ to represent an arbitrarily large corruption on the quantile subsample. For the update sample, we set the corruption value $\epsilon_{k+1,0}$ adversarially according to \cref{eq: adversarial worst-case noise}. Under the oblivious noise model, we sample $\epsilon_{k+1,j}\sim\operatorname{Unif}(-1000,1000)$ independently for $j=0,\dots,D$.

We then study the dependence on $T$ and $\beta$ under the two noise models. In the following discussion and in \Cref{fig:D_vs_T,fig:D_vs_beta}, $D^*$ denotes the corresponding theoretical bound under each model: $D^*$ under Massart noise and $D^*_{\mathrm{obl}}$ under oblivious noise. To study the relationship between $D$ and $T$, \Cref{fig:D_vs_T} reports the empirical success probability for each pair $(D,T)$. We draw and normalize a ground truth solution $x^\ast\sim N(0,I_n)$ and, for each pair, run $100$ independent trials of \Cref{alg:streaming-qrk} from $x_0=0$. A trial is successful if
$\frac{\|x_T-x^\ast\|^2}{\|x^\ast\|^2}\leq \left(1-\frac{c_{\mathrm{succ}}}{n}\right)^T$.
Both panels use $\beta=0.01$, with the theoretical bound $D^*$ overlaid as a function of $T$.
For each fixed $T$, the success rate generally approaches $1$ as $D$ increases.
Under Massart noise, $D^*$ tracks the empirical success transition well. Under oblivious noise, $D^*$ is consistent with the sporadic pairs $(D,T)$ with $D>5$ that have lower success rates, though the clearest empirical transition occurs at $D=5$.

 \begin{figure}[H]
    \centering
        \subfloat[Massart \label{fig:D_vs_T_massart}]{\includegraphics[width=.45\textwidth]{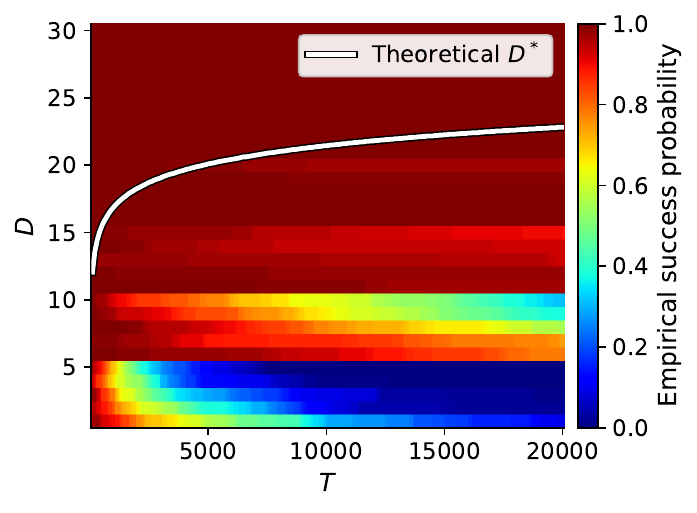}
        }\hfill
        \subfloat[Oblivious \label{fig:D_vs_T_oblivious}]{\includegraphics[width=.45\textwidth]{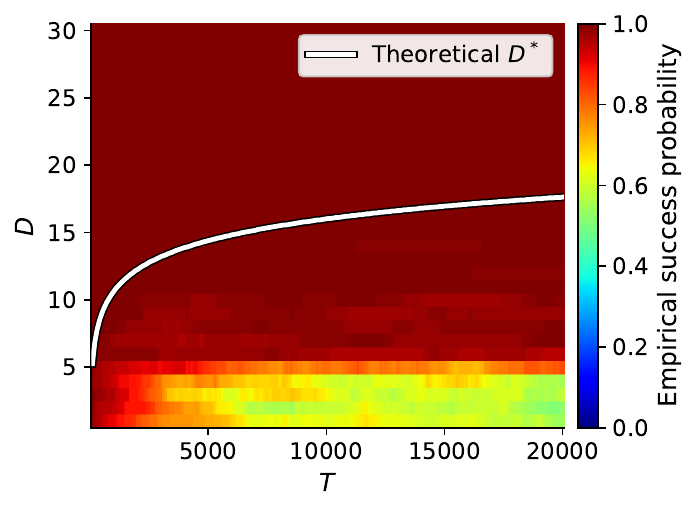}
        }\hfill
        \caption{Empirical success probability as a function of the quantile subsample size $D$ and iteration count $T$, using $c_{\mathrm{succ}}=0.05$. The solid curve shows the theoretical bound $D^*$.}
        \label{fig:D_vs_T}
\end{figure}

To study the relationship between $D$ and $\beta$, \Cref{fig:D_vs_beta} reports the empirical success probability for each pair $(D,\beta)$, again using $100$ independent trials from $x_0=0$. 
Both panels use $T=20000$, with $D^*$ overlaid as a function of $\beta$.
For each $\beta$, the success rate generally increases with $D$. 
Under both models, $D^*$ tracks the empirical success transition well, though it becomes more conservative for larger $\beta$.

\begin{figure}[H]
    \centering
        \subfloat[Massart \label{fig:D_vs_beta_massart}]{\includegraphics[width=.45\textwidth]{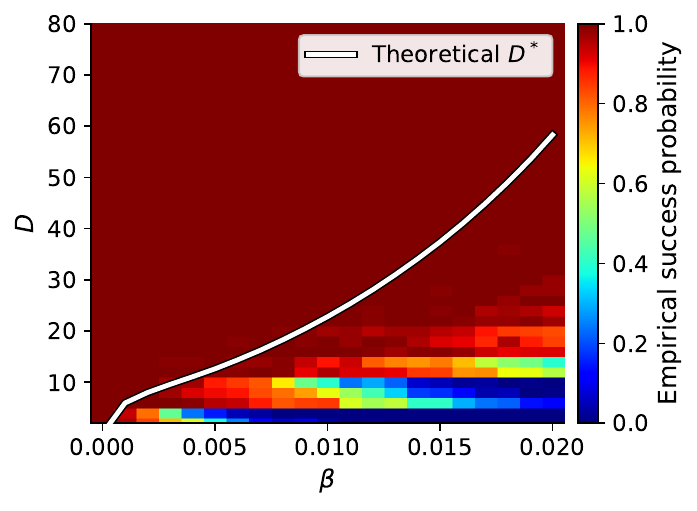}
        }\hfill
        \subfloat[Oblivious \label{fig:D_vs_beta_oblivious}]{\includegraphics[width=.45\textwidth]{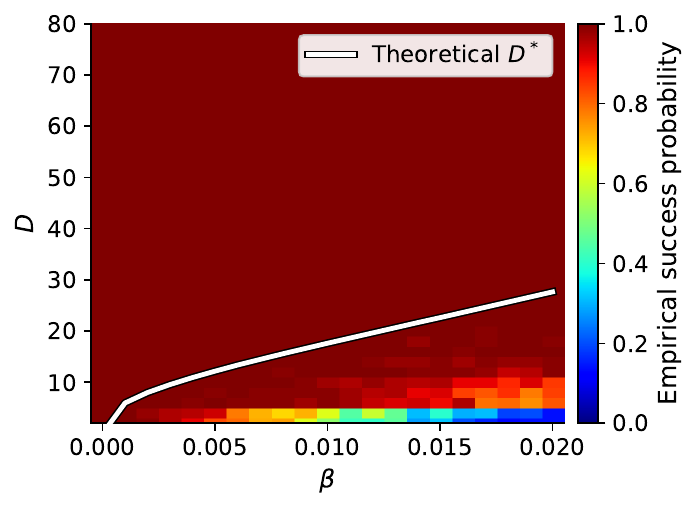}
        }\hfill
        \caption{Empirical success probability as a function of the quantile subsample size $D$ and corruption rate $\beta$, using $c_{\mathrm{succ}}=0.05$. The solid curve shows the theoretical bound $D^*$.}
        \label{fig:D_vs_beta}
\end{figure}

\section{Conclusion} \label{sec: conclusion}

We studied quantile randomized Kaczmarz (QRK) for streaming corrupted linear systems under Massart and oblivious noise. Exploiting the independence of fresh samples across iterations, we obtained explicit, computable bounds on both the maximum tolerable corruption level and the required subsample size, resolving open questions about implicit constants in prior work.
We showed that streaming QRK converges linearly to the true solution using only $D = O(\log T)$ samples per iteration, and can tolerate corruption levels up to $\beta \approx 0.069$ under Massart noise and $\beta \approx 0.32$ under oblivious noise. At $q=0.75$, $\beta=0.01$, and $T=20000$ iterations, the required subsample sizes are $D=25$ under Massart noise and $D=13$ under oblivious noise.
These theoretical guarantees can be further improved when the corruption follows a known distribution, such as Gaussian noise (see \cref{rem: Gaussian corruption}).
Our numerical simulations confirm that the derived thresholds $D^*$ track the empirical success transition closely, particularly in the Massart setting. Future directions include tightening the constants further under structured corruption, extending the analysis beyond uniform spherical measurements, and understanding how the tolerable corruption level evolves over the course of optimization rather than being fixed.

\bibliographystyle{plain}
\bibliography{ref}

\end{document}